\documentclass[12pt,reqno,fleqn]{amsart}  
\usepackage{tikz}
\usepackage{amsmath,amstext,amsthm,amssymb,amsxtra}
\usepackage[top=1.5in, bottom=1.5in, left=1.25in, right=1.25in]	{geometry}
\usepackage{lmodern}

\usepackage{graphics}

\usepackage{mathtools}
\mathtoolsset{showonlyrefs,showmanualtags}

\usepackage{hyperref} 
\hypersetup{
	colorlinks=true,       
	linkcolor=blue,          
	citecolor=magenta,        
	filecolor=magenta,      
	urlcolor=cyan           
}

\usepackage[msc-links]{amsrefs}


\newtheorem{theorem}{Theorem}[section]

\newtheorem{lemma}{Lemma}[section]
\newtheorem{corollary}{Corollary}[section]
\newtheorem{OldTheorem}{Theorem}
\theoremstyle{definition}
\newtheorem{definition}{Definition}[section]
\theoremstyle{definition}

\theoremstyle{remark}
\newtheorem{remark}{Remark}[section]

\theoremstyle{remark}

\def\head{{\rm head}}

\def\sign{{\rm sign\,}}

\def\MM^d{\ensuremath{\mathfrak M}}
\def\MM{\ensuremath{\mathcal M}}

\def\ZZ{\ensuremath{\mathbb Z}}

\def\ZN{\ensuremath{\mathbb N}}
\def\zN{\ensuremath{\mathfrak N}}

\def\ZD{\ensuremath{\cal D}}

\def\ZD{\ensuremath{\mathcal D}}

\def\ZG{{\mathcal G\,}}

\numberwithin{equation}{section}
\def\md#1#2\emd{\ifx0#1
	\begin{equation*} #2 \end{equation*}\fi  
	\ifx1#1\begin{equation}#2\end{equation}\fi   
	\ifx2#1\begin{align*}#2\end{align*}\fi   
	\ifx3#1\begin{align}#2\end{align}\fi    
	\ifx4#1\begin{gather*}#2\end{gather*}\fi  
	\ifx5#1\begin{gather}#2\end{gather}\fi   
	\ifx6#1\begin{multline*}#2\end{multline*}\fi  
	\ifx7#1\begin{multline}#2\end{multline}\fi  
	\ifx8#1\begin{multline*}\begin{split}#2\end{split}\end{multline*}\fi
	\ifx9#1\begin{multline}\begin{split}#2\end{split}\end{multline}\fi
}
\newcommand {\e }[1]{\eqref{#1}}
\newcommand {\lem }[1]{Lemma \ref{#1}}

\newcommand {\cor }[1]{Corollary \ref{#1}}

\newcommand {\trm }[1]{Theorem \ref{#1}}

\newcommand {\sect }[1]{Section \ref{#1}}

\title[] {On generalized lacunary series}
\author{Grigori A. Karagulyan}
\address{Faculty of Mathematics and Mechanics, Yerevan State
University, Alex Manoogian, 1, 0025, Yerevan, Armenia} 
\email{g.karagulyan@ysu.am}
\author{Vahe G. Karagulyan}
\address{Faculty of Mathematics and Mechanics, Yerevan State
	University, Alex Manoogian, 1, 0025, Yerevan, Armenia} 
\email{vahekar2000@gmail.com}

\thanks{The work was supported by the Science Committee of RA, in the frames of the research project 21AG‐1A045 }

\subjclass[2010]{42C05, 42C10, 42C25, 42A55}
\keywords{Khintchine type inequality, trigonometric series, lacunary sums, Rademacher functions, Walsh system, unconditional convergence}
	\begin{document}
\begin{abstract}
	Given lacunary sequence of integers, $n_k$,  $n_{k+1}/n_k>\lambda>1$, we define a new sequence $\{m_k\}$ formed by all possible $l$-wise sums $\pm  n_{k_1}\pm n_{k_2}\pm \ldots\pm n_{k_l}$.
We prove if $\lambda>\lambda_l$, then any series
\begin{equation}\label{10}
	\sum_kc_ke^{im_kx},
\end{equation}
with $\sum_k|c_k|^2<\infty$ converges almost everywhere after any rearrangement of the terms, where $1<\lambda_l<2$ is a certain critical value.
We establish this property, proving a new Khintchine type inequality $\|S\|_p\le C_{l,\lambda,p}\|S\|_2$, $p>2$, where $S$ is a finite sum of form \e{10}.
 For $\lambda\ge 3$, we also establish a sharp rate $p^{l/2}$ for the growth of the constant $C_{l,\lambda,p}$ as $p\to\infty$. Such an estimate for the Rademacher chaos sums was proved independently by Bonami \cite{Bon} and Kiener \cite{Kie}.  In the case of $\lambda\ge 3$ we also establish some inverse convergence properties of series \e{10}: 1) if series \e{10} converges a.e., then $\sum_k|c_k|^2<\infty$, 2) if it a.e. converges to zero, then  $c_k=0$.
\end{abstract}

	\maketitle  
\section{Introduction}
\subsection{An historical overview} Let $\{r_n\}_{n\ge 1}$ be the sequence of Rademacher functions $r_n(x)=\sign\left(\sin 2^n\pi x\right)$ on $(0,1)$.
The classical Khintchine inequality states that for any $0<p<\infty$ there are constants $A_p,B_p$ such that
\begin{equation}\label{1}
 A_p\left\|S\right\|_2\le \left\|S\right\|_p\le B_p\left\|S\right\|_2,
\end{equation}
for every Rademacher polynomial $S$. The Khintchine inequality is a well-known object in analysis and probability with various generalizations and applications. A special case of the inequality was first studied by Khintchine \cite{Khi}, proving the right bound of \e{1} with $B_p=\sqrt{p/2+1}$ and $p\ge 2$. Further study of  the inequality were given by Littlewood \cite{Lit}, Paley and Zygmund \cite{PaZy}. 
Let $A_p$ and $B_p$ denote the best constants, for which inequality \e{1} holds. It is trivial that $A_p=1$ if $2\le p<\infty$ and $B_p=1$ for all $0<p\le 2$, while it took the work of many mathematicians to settle all the other cases. Stechkin \cite{Ste} computed $B_{p}$ for even integers $p\ge 4$, which then extended for all real numbers $p\ge 3$ by Young \cite{You}. Then solving a long-standing problem of Littlewood (see. \cite{Lit}) Szarek \cite{Sza} proved that $A_1=1/\sqrt{2}$. Haagerup in \cite{Haa} introduced a new method, computing the sharp constants in remaining cases and  recovering the prior results newly.

For a given integer $l\ge 1$ denote by $\ZD(l)$ the set of integers $m$, permitting a dyadic representation 
\begin{equation}\label{d70}
	m=2^{k_1}+2^{k_2}+\ldots+2^{k_l},
\end{equation}
and let 
\begin{equation}\label{d74}
	w_m(x)=r_{k_1}(x)\ldots r_{k_l}(x)
\end{equation}
be the corresponding Walsh function. In some literature the system $\{w_m:\, m\in \ZD(l)\}$ is called Rademacher chaos of order $l$ (\cite{Mul, Ast1}). This subsystem of the Walsh functions has been investigated from different point view: boolean functions, theory of orthogonal series, probability theory \cite{Ast1, Mul, Don}. It has specific properties that the complete system of Walsh functions doesn't.
The following well-known inequality proved independently by Bonami \cite{Bon} and Kiener \cite{Kie} (see also \cite{Don} chap. 9, \cite{Blei} chap. 7) provides a generalization of the Khintchine classical inequality for the Rademacher chaos polynomials 
\begin{equation}\label{d1}
	S(x)=\sum_{m\in \ZD(l)}a_mw_m(x),
\end{equation}
where $a_m$ are real numbers. It plays significant role in the theory of boolean functions (see \cite{Don}, chap. 9).
\begin{OldTheorem}[Bonami-Kiener]
For any integers $l\ge 2$ and any sum in \e{d1} it holds the bound
\begin{equation}\label{d2}
	\|S\|_p\le (p-1)^{l/2}\|S\|_2,\quad p>2.
\end{equation}
\end{OldTheorem}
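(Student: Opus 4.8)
The approach I would take is the \emph{hypercontractivity} method, which is essentially Bonami's original argument. For $0\le\rho\le 1$ define the operator $T_\rho$ on functions $f=\sum_m\hat f(m)w_m$ on $(0,1)$ by $T_\rho w_m=\rho^{\nu(m)}w_m$, where $\nu(m)$ is the number of ones in the binary expansion of $m$; equivalently $T_\rho f$ is the average of $f$ over independent resamplings of each Rademacher sign with probability $(1-\rho)/2$. Since $\nu(m)=l$ for every $m\in\ZD(l)$, the sum $S$ in \e{d1} satisfies $T_\rho S=\rho^l S$. Granting the hypercontractive inequality
\[
\|T_\rho f\|_p\le\|f\|_2,\qquad p\ge 2,\quad 0\le\rho\le(p-1)^{-1/2},
\]
and applying it with $\rho=(p-1)^{-1/2}$ to $S$, one gets $\rho^l\|S\|_p=\|T_\rho S\|_p\le\|S\|_2$, that is $\|S\|_p\le\rho^{-l}\|S\|_2=(p-1)^{l/2}\|S\|_2$, which is precisely \e{d2}.

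So everything reduces to the hypercontractive inequality, which I would prove in the two classical steps. The base case is the \emph{two-point inequality}: for all real $a,b$ and all $p\ge 2$, with $\rho=(p-1)^{-1/2}$,
\[
\frac{|a+\rho b|^p+|a-\rho b|^p}{2}\le\bigl(a^2+b^2\bigr)^{p/2}.
\]
By symmetry and homogeneity this reduces to the one-variable statement $|1+\rho t|^p+|1-\rho t|^p\le 2(1+t^2)^{p/2}$ for $t\ge 0$, which is verified by comparing the power-series expansions of the two sides for small $t$ (using $p\ge 2$) together with a monotonicity argument for large $t$. This elementary but delicate estimate is, I expect, the main obstacle: it is the one place where the exact value $\rho=(p-1)^{-1/2}$ enters, and pinning down the sharp constant requires care.

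The second step is tensorization over the Rademacher variables, by induction on their number (the two-point inequality being the case of a single variable). Let $f$ depend on $r_1,\dots,r_n$ and write $f=g+r_n h$ with $g,h$ functions of $r_1,\dots,r_{n-1}$ only; if $T_\rho'$ is the corresponding operator in the first $n-1$ variables, then $T_\rho f=T_\rho'g+\rho\,r_n T_\rho'h$. Applying the two-point inequality in the variable $r_n$ (with $a=T_\rho'g$ and $b=T_\rho'h$ fixed), then Minkowski's integral inequality in $r_n$ with exponent $p/2\ge 1$, and finally the inductive hypothesis to $g+r_n h$ — which for each fixed value of $r_n$ is a genuine function of $r_1,\dots,r_{n-1}$ — one obtains
\[
\|T_\rho f\|_p^p\le\Bigl(\mathbb{E}_{r_n}\|T_\rho'(g+r_n h)\|_p^2\Bigr)^{p/2}\le\Bigl(\mathbb{E}_{r_n}\|g+r_n h\|_2^2\Bigr)^{p/2}=\|f\|_2^p.
\]
Since every polynomial $S$ in \e{d1} involves only finitely many Rademacher functions, this completes the proof.

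For completeness I would mention the alternative available when $p=2k$ is an even integer: expanding $\|S\|_{2k}^{2k}=\mathbb{E}[S^{2k}]$ and grouping the products $w_{m_1}\cdots w_{m_{2k}}$ by the symmetric difference of their index sets turns \e{d2} into a matter of counting and the Cauchy--Schwarz inequality, after which one interpolates to general $p>2$. This route, however, does not recover the sharp constant $(p-1)^{l/2}$ for non-integer $p$, which is why the hypercontractive argument above is the one I would present.
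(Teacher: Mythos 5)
There is nothing to compare your argument against inside the paper: the statement is quoted as Theorem A, attributed to Bonami \cite{Bon} and Kiener \cite{Kie}, and is used later (in the proof of \trm{T2}) as a black box, so the paper contains no proof of it. Your proposal is the classical hypercontractivity route, essentially Bonami's original argument, and its architecture is correct: since $T_\rho S=\rho^l S$ for a chaos of homogeneous degree $l$, the bound \e{d2} with the exact constant $(p-1)^{l/2}$ does follow from $(2,p)$-hypercontractivity at $\rho=(p-1)^{-1/2}$, and your tensorization step (two-point inequality in the last Rademacher variable, Minkowski's integral inequality with exponent $p/2\ge 1$, then the inductive hypothesis for each fixed value of that variable) is set up and chained correctly.

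The one thin spot is the two-point inequality itself, which you rightly identify as the crux but only sketch. Writing $s=\rho t$, the claim is $\tfrac12\left((1+s)^p+(1-s)^p\right)\le\left(1+(p-1)s^2\right)^{p/2}$ for $s\ge 0$, $p\ge 2$, and the proposed ``power-series comparison for small $t$ plus monotonicity for large $t$'' understates the difficulty: for non-even $p$ the coefficients $\binom{p/2}{k}$ (and eventually $\binom{p}{2k}$) change sign, so a naive term-by-term comparison does not go through, and the regime $s>1$ is handled by the substitution $s\mapsto 1/s$ together with $p\ge 2$ rather than by monotonicity. The standard repair is to prove instead the dual $(q,2)$ two-point inequality for $1<q\le 2$, where all coefficients $\binom{q}{2k}$ are nonnegative and the elementary bound $(1+x)^{q/2}\le 1+\tfrac q2 x$ finishes the comparison, and then deduce the $(2,p)$ case by duality, using the self-adjointness of $T_\rho$. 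Since the inequality is classical and true, this is a fixable gap in exposition rather than a flaw in the approach; with it supplied, your proof is complete and yields exactly the stated constant.
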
 
It was also proved in \cite{Bon} that the constant growth in \e{d2} is optimal in the sense that  for any $p>2$ there exists a sum \e{d1} such that 
$	\|S\|_p\ge C_lp^{l/2}\|S\|_2$ with a constant $C_l>0$, depending only on $l$. Using a standard argument one can deduce from \e{d2}  the bound
\begin{equation}\label{d3}
	\|S\|_p\le C_{p,q,l}\|S\|_q,
\end{equation} 
for any Rademacher chaos \e{d2}, where $1\le q<p<\infty$. In contrast to classical Rademacher case (when $l=1$) to the best of our knowledge the optimal constant that can be in \e{d3}, is not known for any combination of the parameters $p> q$. Some estimates of the optimal constant of \e{d3} one can find in papers \cite{Jan, Don, Lar, Ivan}.

The following definition is well known in the theory of orthogonal series. 
\begin{definition}
	An orthogonal system $\{\phi_n,\, n=1,2,\ldots\}\subset L^2(0,1)$ is said to be a convergence system, if the condition
\begin{equation}\label{d14}
	\sum_{n=1}^\infty a_n^2<\infty,
\end{equation}
implies almost everywhere convergence of orthogonal series
\begin{equation}\label{d13}
	\sum_{n=1}^\infty a_n\phi_n(x).
\end{equation}
If \e{d13} converges a.e. after any rearrangement of the terms, then we say $\{\phi_n\}$ is an unconditional convergence system.
\end{definition} 
It is well-known that the trigonometric and  Walsh systems are convergence systems (see \cite{Car, Bil}), but none of them is an unconditional convergence system (see \cite{Uly1, Uly2, Ole}). Moreover, Ul\cprime yanov \cite{Uly1} and Olevskii \cite{Ole} proved that unconditionality fails for any complete orthonormal system. According to a classical result of Stechkin \cite{Ste} (see also \cite{KaSa}, chap. 9.4), if an orthonormal system $\{\phi_n\}$ satisfies the Khintchine type inequality
\begin{equation}
	\left\|\sum_{k=1}^n a_k\phi_k\right\|_p\le c	\left\|\sum_{k=1}^n a_k\phi_k\right\|_2
\end{equation}
for some $p>2$, then it is an unconditional convergence system. So  from inequality \e{d2} it follows that the Rademacher chaos of any order is an unconditional convergence system.

Inverse convergence properties of orthonormal systems give characterization of the coefficients $\{a_n\}$ based on certain convergence conditions on series $\e{d13}$. Classical examples of orthonormal systems having inverse properties are Rademacher and lacunary trigonometric systems. It is well-known if the Rademacher series 
\begin{equation}\label{d19}
	\sum_{n=1}^\infty a_nr_n(x)
\end{equation}
converges on a set of positive measure, then the coefficients satisfy \e{d14} (Kolmogorov, \cite{KaSt}, chap. 4.5), and if  \e{d19} converges to zero on a set of measure $>1/2$, then $a_n=0$ for all $n$ (Stechkin-Ul\cprime yanov \cite{StUl}). 

The Rademacher chaos in the context of inverse properties was first considered by Astashkin and Sukhanov \cite{Ast2} (see also \cite{Ast1}, chap. 6). 
\begin{OldTheorem}[Astashkin-Sukhanov, \cite{Ast2}]\label{O2}
	Let $\{m_k\}$ be the increasing numeration of $\ZD(l)$. If series
	\begin{equation}\label{d18}
		\sum_{k\ge 1}a_{k}w_{m_k}(x)
	\end{equation}
	converges in measure, then 
	\begin{equation*}
		\sum_{k\ge 1}|a_k|^2<\infty.
	\end{equation*}
\end{OldTheorem}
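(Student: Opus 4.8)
The plan is to argue by contradiction. Suppose the series converges in measure but $\sum_k|a_k|^2=\infty$. Convergence in measure of $\sum_k a_kw_{m_k}$ means that the partial sums $S_N=\sum_{k=1}^{N}a_kw_{m_k}$ form a Cauchy sequence in measure: for every $\varepsilon,\delta>0$ there is $K$ with $\big|\{x\in(0,1):|S_N(x)-S_M(x)|>\varepsilon\}\big|<\delta$ whenever $N,M\ge K$. The engine of the proof will be a \emph{lower} bound on the distribution of a Rademacher chaos polynomial — an anti‑concentration / small–ball estimate — which I extract from the Bonami–Kiener inequality \e{d2}. Then I exhibit ``tail blocks'' $S_N-S_M$ whose $L^2$‑norms blow up, and use the anti‑concentration bound to see that these blocks stay large on a set of measure bounded below, contradicting the Cauchy‑in‑measure property.

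The key lemma I would establish is: there is a constant $c_l>0$, depending only on $l$, such that every finite chaos polynomial $T=\sum_{m\in\ZD(l)}b_mw_m$ satisfies
\begin{equation}\label{pz}
	\big|\{x\in(0,1):\ |T(x)|\ge\tfrac12\|T\|_2\}\big|\ge c_l.
\end{equation}
To prove \e{pz}, apply \e{d2} with $p=4$ (for $l=1$ the classical Khintchine inequality) to obtain $\int_0^1T^4\le 9^{\,l}\big(\int_0^1T^2\big)^2$, i.e. the fourth and second moments of $T$ are comparable with a constant depending only on $l$. Then the Paley–Zygmund inequality, applied to the nonnegative function $T^2$ with threshold $\theta=\tfrac14$, gives
\begin{equation*}
	\big|\{|T|\ge\tfrac12\|T\|_2\}\big|=\Big|\Big\{T^2\ge\tfrac14\textstyle\int_0^1T^2\Big\}\Big|\ \ge\ \Big(1-\tfrac14\Big)^2\frac{\big(\int_0^1T^2\big)^2}{\int_0^1T^4}\ \ge\ \frac{9}{16}\,9^{-l}=:c_l>0 .
\end{equation*}
The point is that $c_l$ is \emph{uniform} over all chaos polynomials of order $l$, in particular over tails of arbitrarily high index.

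To finish, note that since the Walsh system is orthonormal and $k\mapsto m_k$ is injective, $\{w_{m_k}\}$ is an orthonormal sequence, so for $N>M$ the block $D_{N,M}:=S_N-S_M=\sum_{k=M+1}^{N}a_kw_{m_k}$ has $\|D_{N,M}\|_2^2=\sum_{k=M+1}^{N}|a_k|^2$. Because $\sum_k|a_k|^2=\infty$, for each fixed $M$ this quantity tends to $\infty$ as $N\to\infty$; pick $N(M)>M$ with $\|D_{N(M),M}\|_2>2$. Applying \e{pz} to $D_{N(M),M}$ and using $\tfrac12\|D_{N(M),M}\|_2>1$ gives
\begin{equation*}
	\big|\{x:\ |S_{N(M)}(x)-S_M(x)|>1\}\big|\ \ge\ \big|\{x:\ |D_{N(M),M}(x)|\ge\tfrac12\|D_{N(M),M}\|_2\}\big|\ \ge\ c_l>0
\end{equation*}
for \emph{every} $M$, with $N(M)>M\to\infty$. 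This contradicts the Cauchy‑in‑measure property of $\{S_N\}$ (taking $\varepsilon=1$, $\delta=c_l/2$), and therefore $\sum_k|a_k|^2<\infty$. There is no deep obstacle here once \e{d2} is available; the only points requiring care are that convergence in measure must be exploited through its Cauchy form, and that the anti‑concentration constant in \e{pz} is uniform over all chaos polynomials, including the high‑index blocks — both handled above. (One could replace the $p=4$ step by the full hypercontractive comparison $\|T\|_2\le C_l\|T\|_1$, but the $p=4$ route is the shortest.)
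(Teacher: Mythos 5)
Your argument is correct: convergence in measure forces the partial sums to be Cauchy in measure, the Bonami--Kiener bound \e{d2} with $p=4$ gives the uniform moment comparison $\|T\|_4^4\le 9^l\|T\|_2^4$ for every chaos polynomial of order $l$, and Paley--Zygmund then yields the anti-concentration estimate with a constant depending only on $l$, which is exactly what is needed to rule out $L^2$-unbounded tail blocks. Note, however, that the paper does not prove this statement at all: it is quoted from Astashkin--Sukhanov \cite{Ast2}, whose original proof uses a decoupling technique tied to the canonical increasing enumeration of $\ZD(l)$; what the paper proves instead is the generalization \trm{T4}, obtained from the elementary inverse Parseval inequality of \lem{L13}, namely $\int_E|\sum b_mw_m|^2>c\sum|b_m|^2$ for sets $E$ of measure $>1-2^{-4l}$. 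Your route is therefore genuinely different from both: it is the classical Zygmund-style scheme (Khintchine-type inequality plus anti-concentration), it is short, it does not actually use that $\{m_k\}$ is increasing, and it only needs hypercontractivity, which the paper states as Theorem A. What it does not give is the extra strength of the paper's approach: \lem{L13} replaces convergence in measure by mere boundedness of the sums on a set of large measure, works for arbitrary summation matrices satisfying 1)--3) (hence arbitrary rearrangements), and avoids hypercontractivity altogether; conversely, your argument exploits the full strength of \e{d2} to get a softer hypothesis-to-conclusion path for the specific convergence-in-measure statement. Both are valid; they simply trade the deep inequality \e{d2} against the elementary but more quantitative large-set $L^2$ lower bound.
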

\begin{OldTheorem}[Astashkin-Sukhanov, \cite{Ast2}]\label{O3}
If series \e{d18} converges to zero on a set $E\subset (0,1)$ of measure $|E|>1-2^{-l}$, then $a_{k}=0$ for all $k\ge 1$.
\end{OldTheorem}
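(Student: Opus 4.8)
The plan is to mimic the argument behind the Stechkin--Ul\cprime yanov theorem for Rademacher and lacunary series, but using a group of symmetries attached to each individual frequency $m_j$ instead of to a fixed block of low-order binary digits. We fix an index $j\ge1$ and prove $a_j=0$; since $j$ is arbitrary this gives the theorem. Note that this route will not use \otrm{O2}: the Fourier averaging below is over a \emph{finite} group, so only finitely many instances of the hypothesis are combined, and no $\ell^2$ information about $\{a_k\}$ is needed.

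First I would fix $j$ and write $m_j=2^{p_1}+\dots+2^{p_l}$ with $p_1<\dots<p_l$. Recall that the Walsh functions $w_m$ are the characters of the dyadic group $\bigl([0,1),\oplus\bigr)$, where $\oplus$ is carry-free (bitwise) addition of binary expansions, so that $w_m(x\oplus t)=w_m(t)\,w_m(x)$ for a.e. $x$, and that for every fixed dyadic rational $t$ the map $x\mapsto x\oplus t$ is a measure preserving bijection of $(0,1)$. Let $G_j$ be the group of such translations by the $2^l$ dyadic rationals whose binary digits are supported in $\{p_1,\dots,p_l\}$. Since $|E|>1-2^{-l}$, the $2^l$ sets $\bigl((0,1)\setminus E\bigr)\oplus t$, $t\in G_j$, have total measure $2^l\bigl|(0,1)\setminus E\bigr|<1$, so they do not cover $(0,1)$ and the set
\[
F_j:=\bigcap_{t\in G_j}\bigl(E\oplus t\bigr)
\]
has positive measure. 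For $x\in F_j$ every translate $x\oplus t$, $t\in G_j$, lies in $E$, hence the partial sums $S_N=\sum_{k\le N}a_kw_{m_k}$ satisfy
\[
\sum_{k\le N}a_k\,w_{m_k}(t)\,w_{m_k}(x)=S_N(x\oplus t)\longrightarrow0\qquad(N\to\infty)
\]
for every $x\in F_j$ and every $t\in G_j$.

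The last step is a finite Fourier inversion on $G_j$. Restricted to $t\in G_j$, the value $w_{m_k}(t)$ depends only on the $l$ digits of $m_k$ in the positions $p_1,\dots,p_l$, so $t\mapsto w_{m_k}(t)$ is one of the $2^l$ characters of $G_j\cong(\mathbb Z/2\mathbb Z)^l$. Averaging the relations above against the all-ones character and using orthogonality of characters, one isolates the partial sums of the sub-series over those $k$ for which all of the digits $p_1,\dots,p_l$ occur in $m_k$; but an element of $\ZD(l)$ has exactly $l$ nonzero binary digits, so this forces $m_k=m_j$, i.e. $k=j$. Hence that average equals $a_jw_{m_j}(x)$ for all $N\ge j$, and therefore $a_jw_{m_j}(x)=0$ on $F_j$. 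As $|F_j|>0$ and $|w_{m_j}|\equiv1$, we obtain $a_j=0$.

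I do not expect a serious obstacle in this scheme: the multiplicativity of the Walsh characters, the measure-invariance of dyadic translations, the pigeonhole count producing $F_j$, and the orthogonality of characters of $(\mathbb Z/2\mathbb Z)^l$ are all routine (and if the hypothesis only gives $S_N\to0$ a.e.\ on $E$, one discards a null subset of $E$ at the outset). The one point that genuinely requires care is the choice of the group $G_j$: it must be anchored to the binary support of $m_j$, so that among all elements of $\ZD(l)$ precisely $m_j$ survives the averaging, and the equality $\#G_j=2^l$ is exactly what makes the threshold $1-2^{-l}$ the natural one.
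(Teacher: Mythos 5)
Your argument is correct and is essentially the paper's own proof of its generalization \trm{T8} (which contains \otrm{O3} as the special case $t_{n,k}=\ZI_{\{k\le n\}}$ of partial sums): your positive-measure set $F_j$ plays the role of \lem{L12}, and your averaging over $G_j\cong(\ZZ/2\ZZ)^l$ against the parity character $(-1)^{\varepsilon_1+\cdots+\varepsilon_l}$ (i.e.\ $w_{m_j}$ restricted to $G_j$) is exactly the alternating sum of \lem{L10}. Only a wording caution: call it the parity (full-weight) character rather than the ``all-ones character,'' since the trivial character would isolate the wrong sub-series; the computation you describe is the right one.
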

It was also shown in \cite{Ast2} the sharpness of the condition $|E|>1-2^{-l}$ in \trm{O3}. More precisely, it was given an example of non-trivial series \e{d19},
which terms vanish on a set of measure $1-2^{-l}$.

\subsection{Generalized lacunary trigonometric sums}
A sequence of positive integers $\zN=\{n_k:\, k=1,2,\ldots\}$ is said to be $\lambda$-lacunary if 
\begin{equation}\label{d54}
	n_{k+1}/ n_k>\lambda>1.
\end{equation}
The trigonometric functions
\begin{equation}
\{e^{\pm in_kx}:\, k=1,2,\ldots\}
\end{equation}
corresponding to a $\lambda$-lacunary $\{n_k\}$ have many common properties with Rademacher functions. The analogue of the Khintchine inequality \e{1} was obtained by Zygmund (\cite{Zyg}, chap. 5.8) and here again the optimal  constant $B_p$ satisfy the relation $B_p\sim \sqrt p$ as $p\to \infty$.  Zygmund \cite{Zyg1, Zyg2} also proved inverse properties for lacunary trigonometric series. Namely, if the lacunary sums
\begin{equation}\label{d53}
	\sum_{k=-m}^{m} c_ke^{ in_kx},\quad m=1,2,\ldots, \quad (n_{-k}=n_k)
\end{equation} 
converge on a set $E$ of positive measure, then $\sum_k|c_k|^2<\infty$,  if sums \e{d53} converge to zero on $E$, then $c_k=0$.

To the best of our knowledge the analogue of Rademacher chaos  for lacunary trigonometric systems was not previously considered. 
\begin{definition}
	Let $\mathfrak{N}=\{n_k:\, k=1,2,\ldots \}$ be an increasing sequence of integers, $1\le n_1< n_2< \ldots $. For $l\ge 2$ denote by $\zN(l)$ the set of integers $m\in \ZZ$, having the representation
	\begin{equation}\label{d31}
		m=\varepsilon_1 n_{k_1}+\varepsilon_2n_{k_2}+\ldots+\varepsilon_{l}n_{k_l},
	\end{equation}
where $\varepsilon_j=\pm 1$ and $k_1>k_2>\ldots>k_l$, and let $\zN^*(l)=\cup_{1\le s\le l}\zN(s)$.
\end{definition} 
We will consider the $\zN(l)$ for a $\lambda$-lacunary sequences $\zN$. The corresponding generalized lacunary system
\begin{equation}\label{d62}
	\{e^{imx}:\, m\in \zN(l)\}
\end{equation}
will serve as an analogue of Rademacher chaos of order $l$. Let $\lambda_l>1$ be the single solution of the equation
\begin{equation}\label{d28}
	x^{l-1}=x^{l-2}+\ldots+1,\quad x>1,
\end{equation}
where $l\ge 2$ is an integer. 
\begin{theorem}\label{T1}
Let $l\ge 2$,  $\lambda>\lambda_l$ and $\zN$ be a $\lambda$-lacunary sequence of integers. Then for any finite sum
\begin{equation}\label{d26}
	S(x)=\sum_{m\in \zN(l)}c_me^{2\pi imx},
\end{equation}
it holds the bound
\begin{equation}\label{d29}
	\|S\|_p\le C_{l,\lambda,p}\|S\|_2, \quad p>2.
\end{equation}
\end{theorem}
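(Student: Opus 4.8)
The plan is to prove \eqref{d29} for $p=2N$ an even integer — this suffices, since $\|S\|_p\le\|S\|_{2N}$ on $(0,1)$ for $2<p\le 2N$ — by induction on $l$, taking as base case $l=1$ the classical Khintchine inequality for lacunary trigonometric sums (Zygmund, \cite{Zyg}). In the inductive step I would reduce an order‑$l$ sum to a superposition of frequency‑localized blocks whose ``cores'' are order‑$(l-1)$ generalized lacunary sums over the same $\lambda$‑lacunary sequence, and then apply a Littlewood–Paley estimate.

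The arithmetic fact driving everything is that, by \eqref{d28}, $\lambda>\lambda_l$ is exactly the condition $\sum_{j=1}^{l-1}\lambda^{-j}<1$, equivalently $n_{k_1}>n_{k_2}+\dots+n_{k_l}$ whenever $k_1>\dots>k_l$. Put $\delta:=1-\sum_{j=1}^{l-1}\lambda^{-j}\in(0,1)$. First I would record the structural consequences: for $m\in\zN(l)$ and any representation \eqref{d31} of $m$ with leading index $k$, the tail $m-\varepsilon_1 n_k$ has modulus $<(1-\delta)n_k<n_k$, so $\sign(m)=\varepsilon_1$ and $\delta n_k<|m|<(2-\delta)n_k$; hence the leading indices occurring in the representations of a fixed $m$ lie in a block of $O_{l,\lambda}(1)$ consecutive integers. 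Assigning to $m$ its smallest leading index $k=\phi(m)$, the tail $t(m):=m-\sign(m)\,n_{\phi(m)}$ is then a well‑defined element of $\zN(l-1)$ all of whose indices are $<\phi(m)$, and the map $m\mapsto(\phi(m),\sign(m),t(m))$ is injective on $\zN(l)$. This yields
\[
S=\sum_k\Bigl(e^{2\pi i n_k x}A_k(x)+e^{-2\pi i n_k x}B_k(x)\Bigr),\qquad A_k=\sum_{\substack{\phi(m)=k\\ \sign(m)=+}}c_m\,e^{2\pi i t(m)x},
\]
with $B_k$ the analogous sum over $\sign(m)=-$; each $A_k,B_k$ is an order‑$(l-1)$ generalized lacunary sum, and by the injectivity and Parseval $\sum_k(\|A_k\|_2^2+\|B_k\|_2^2)=\|S\|_2^2$.

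By the structural facts the spectrum of $e^{2\pi i n_k x}A_k$ lies in $I_k^+:=(\delta n_k,(2-\delta)n_k)\subset(0,\infty)$ and that of $e^{-2\pi i n_k x}B_k$ in $-I_k^+\subset(-\infty,0)$, and $I_k^+\cap I_{k'}^+=\varnothing$ as soon as $|k-k'|\ge\log_\lambda\tfrac{2-\delta}{\delta}$. Splitting $S$ into the $O_{l,\lambda}(1)$ subsums corresponding to residue classes of $k$ modulo $\lceil\log_\lambda\tfrac{2-\delta}{\delta}\rceil+1$, one makes the blocks within each subsum lacunary‑separated (ratio $\ge\lambda$) on each half‑line; applying to each the Littlewood–Paley (Marcinkiewicz multiplier) inequality for lacunary families of intervals in $L^{2N}$, together with the triangle inequality in $L^{N}$ and $|e^{\pm2\pi i n_k x}A_k|=|A_k|$, gives
\[
\|S\|_{2N}\le C(l,\lambda,N)\Bigl(\sum_k\bigl(\|A_k\|_{2N}^2+\|B_k\|_{2N}^2\bigr)\Bigr)^{1/2}.
\]
Applying the inductive hypothesis (legitimate since $\lambda>\lambda_l>\lambda_{l-1}$) to each order‑$(l-1)$ sum and then Parseval yields $\|S\|_{2N}\le C(l,\lambda,N)\,C_{l-1,\lambda,2N}\|S\|_2$, which closes the induction; \eqref{d29} for arbitrary $p>2$ follows with $N=\lceil p/2\rceil$.

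I expect the delicate point to be the structural step, not the (routine) harmonic‑analytic machinery: one must verify that $\lambda>\lambda_l$ genuinely forces $\sign(m)=\varepsilon_1$ and the bounded‑block property — equivalently, that peeling off the leading term is well defined up to bounded ambiguity and that, once the leading index is fixed, the leading term (hence the order‑$(l-1)$ tail) is uniquely determined. This is precisely where the threshold $\lambda_l$ enters: for $\lambda\le\lambda_l$ one can have $0\in\zN(l)$ and the frequency blocks $I_k^+$ overlap unboundedly, so the recursion breaks down. The reduction to even exponents, the Littlewood–Paley estimate for lacunary interval families, and the bookkeeping of constants are all standard.
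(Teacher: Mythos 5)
Your proposal is correct and takes essentially the same route as the paper: induction on $l$, peeling off the head frequency $\pm n_k$ via the localization $\delta n_k<|m|<(2-\delta)n_k$ forced by $\lambda>\lambda_l$ (the paper's property (P1)), applying the Littlewood--Paley inequality to the resulting frequency-localized blocks, Minkowski's inequality in $L^{p/2}$, and then the order-$(l-1)$ inductive hypothesis together with Parseval. Your additional steps --- reducing to even exponents, fixing the smallest admissible leading index to make the head well defined, and splitting into residue classes so the spectral intervals become disjoint and lacunary --- are just more explicit bookkeeping of points the paper treats implicitly.
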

\begin{corollary}\label{C1}
Let $l\ge 2$, $\lambda>\lambda_l$ and $\zN$ be a $\lambda$-lacunary sequence. Then the trigonometric functions \e{d62} form an unconditional convergence system. Namely, for any increasing sequence of integer sets $\ZG_n\subset \zN(l)$ the sums 
\begin{equation}\label{d27}
	\sum_{m\in \ZG_n}c_me^{imx},\quad n=1,2,\ldots,
\end{equation}
converge a.e. as $n\to\infty$, whenever 
\begin{equation}\label{d35}
	\sum_{m\in \zN(l)} |c_m|^2<\infty.
\end{equation}
Conversely, the $\lambda_l$ is the critical bound here. That is for any $l\ge 2$ there exists a $\lambda_l$-lacunary sequence $\zN$ and a choice of coefficients \e{d35} such that for certain increasing sequence of integer sets $\ZG_n\subset \zN(l)$ sums \e{d27} diverge a.e. . 
\end{corollary}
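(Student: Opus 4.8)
\emph{Part (i): unconditional convergence.} This is a soft consequence of \trm{T1}. The functions $e^{imx}$, $m\in\zN(l)$, being distinct characters, form an orthonormal system on the circle, and by \trm{T1} it satisfies a Khintchine type inequality with some (indeed every) exponent $p>2$; hence, by Stechkin's theorem (\cite{Ste}; see also \cite{KaSa}, ch.~9.4), it is an unconditional convergence system. It remains only to note that for an orthonormal system this is equivalent to the formulation in the statement — a.e.\ convergence of $\sum_{m\in\ZG_n}c_me^{imx}$ for every increasing sequence $(\ZG_n)$ of finite subsets: given such a sequence one enumerates $\bigcup_n\ZG_n$ by a bijection $\sigma$ for which every $\ZG_n$ is an initial segment, so that the sums in question are a subsequence of the partial sums of the $\sigma$-rearranged series, and conversely; equivalently, Stechkin's argument produces the maximal bound $\bigl\|\sup_n\bigl|\sum_{m\in\ZG_n}c_me^{imx}\bigr|\bigr\|_2\le C\bigl(\sum|c_m|^2\bigr)^{1/2}$ uniformly in $(\ZG_n)$, and a.e.\ convergence then follows by approximating $(c_m)$ by finitely supported sequences.

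\emph{Part (ii): reduction of the sharpness.} The plan is to construct a $\lambda_l$-lacunary sequence $\zN$ for which $\zN(l)\supseteq\{m\in\ZZ:m\ge N_0\}$ for some $N_0$. Granting this, $\{e^{imx}:m\in\zN(l)\}$ contains the shifted one-sided trigonometric system $\{e^{imx}:m\ge N_0\}$, which is \emph{not} an unconditional convergence system: the full trigonometric system is not (Ul\cprime yanov \cite{Uly1}, Olevskii \cite{Ole}), and the class of unconditional convergence systems is stable under complex conjugation, under multiplication by a fixed unimodular function, under adjoining finitely many orthonormal functions, and under taking the union of two mutually orthogonal such systems, so the shifted half cannot be one. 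Moreover, by the classical device of superposing dilated copies of a counterexample along widely separated frequencies, the divergence can be arranged to hold almost everywhere. Putting the coefficients $(b_m)_{m\ge N_0}$ and the increasing finite sets $F_n$ of such an $\ell^2$ example onto these frequencies (extending by $c_m=0$ elsewhere and taking $\ZG_n:=F_n$) yields a.e.\ divergence of \e{d27}.

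\emph{Part (ii): the critical sequence.} Take a steep initial block, say $n_i=R^i$ for $1\le i\le l$ with $R$ a large integer, and for $k>l$ define $n_k$ by prescribing its \emph{defect}
\begin{equation*}
	D_k:=n_k-n_{k-1}-n_{k-2}-\dots-n_{k-l+1}
\end{equation*}
to be $D_{l+1}=D_l$ and $D_k=D_{k-1}+1$ for $k\ge l+2$, so that the $D_k$ run through all integers $\ge D_l$. Each $D_k$ is the signed sum $n_k-n_{k-1}-\dots-n_{k-l+1}$ over the strictly decreasing indices $k>k-1>\dots>k-l+1$, so $D_k\in\zN(l)$ and therefore $\zN(l)\supseteq\{m:m\ge D_l\}$ — automatically, whatever the $n_i$ are. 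The homogeneous recursion $n_k=n_{k-1}+\dots+n_{k-l+1}$ has characteristic polynomial $x^{l-1}-x^{l-2}-\dots-1$, so by \e{d28} its dominant root is $\lambda_l$ and $n_k\asymp\lambda_l^{\,k}$. The one thing to verify is strict $\lambda_l$-lacunarity: using the identity $n_{k-1}+\dots+n_{k-l+1}=2n_{k-1}-D_{k-1}-n_{k-l}$ together with the relation $2-\lambda_l=\lambda_l^{\,1-l}$ (an algebraic form of \e{d28}, i.e.\ $2\lambda_l^{\,l-1}-\lambda_l^{\,l}=1$), the inequality $n_k>\lambda_l n_{k-1}$ rewrites as $D_k-D_{k-1}>n_{k-l}-\lambda_l^{\,1-l}n_{k-1}$; since $D_k-D_{k-1}\ge0$, it suffices that $n_{k-1}/n_{k-l}=\rho_{k-1}\rho_{k-2}\cdots\rho_{k-l+1}\ge\lambda_l^{\,l-1}$ for the $l-1$ preceding ratios $\rho_j=n_j/n_{j-1}$, and this is self-propagating — if each preceding ratio exceeds $\lambda_l$ then the product exceeds $\lambda_l^{\,l-1}$, whence $\rho_k>\lambda_l$ as well; the first steps are settled by taking $R$ large. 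This last estimate is the main obstacle, and it is borderline precisely because $\lambda=\lambda_l$: for $\lambda>\lambda_l$ the same identity forces $D_k\ge(\lambda-\lambda_l)n_{k-1}\to\infty$, so no such collapse of $\zN(l)$ can occur — which is, in essence, the mechanism behind \trm{T1}. A secondary, routine point is the passage, in the reduction above, from divergence on a set of positive measure to divergence almost everywhere.
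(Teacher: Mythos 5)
Your part (i) is the paper's own argument (\trm{T1} plus Stechkin), and your overall reduction of the sharpness claim is also the paper's: both proofs boil down to producing a $\lambda_l$-lacunary sequence $\zN$ with $\zN(l)\supseteq\{m:\, m\ge N_0\}$ and then importing the classical failure of unconditionality for the trigonometric system. Where you genuinely differ is in the construction of the critical sequence. The paper uses, for each target integer $m\ge 3^l$, a separate block of $l$ frequencies $n_k(m)=[10^{ml}\lambda_l^{k}]+3^k$, $k\le l-1$, closed off by $n_l(m)=m+n_1(m)+\dots+n_{l-1}(m)$, so that $m=n_l(m)-n_{l-1}(m)-\dots-n_1(m)$; the huge separation $10^{ml}$ makes $\lambda_l$-lacunarity of the union of blocks essentially immediate, and each integer is captured by its own block. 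You instead run a single recursion, prescribing the defects $D_k=n_k-n_{k-1}-\dots-n_{k-l+1}$ to increase by one so that the defects themselves sweep out all integers $\ge D_l$; your identity $n_k=D_k-D_{k-1}+2n_{k-1}-n_{k-l}$, the relation $2-\lambda_l=\lambda_l^{1-l}$ from \e{d28}, and the self-propagating induction on the ratios (with base block $n_i=R^i$, $R\ge 2$) do verify strict $\lambda_l$-lacunarity, and the covering of a tail of $\ZZ$ is automatic. Both constructions are correct; the paper's is quicker to check, yours is more economical and makes the criticality mechanism visible (your remark that $\lambda>\lambda_l$ forces $D_k\to\infty$). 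Two small points. First, state the induction hypothesis strictly ($\rho_j>\lambda_l$, hence $n_{k-1}>\lambda_l^{l-1}n_{k-l}$): with ``$\ge$'' and $D_k-D_{k-1}=0$ (as at $k=l+1$) you would only conclude $n_k\ge\lambda_l n_{k-1}$. Second, your detour through closure properties of unconditional convergence systems plus a sketched ``superposition of dilated copies'' to upgrade divergence on a set of positive measure to a.e.\ divergence is avoidable and is the one place where your write-up is a sketch rather than a proof: the classical results the paper cites (Ul\cprime yanov, Olevskii) already provide an $\ell^2$ trigonometric series with a rearrangement diverging a.e., and since $\zN(l)$ contains every integer of modulus $\ge N_0$, one may simply discard the finitely many frequencies of modulus $<N_0$ (or multiply by $e^{iN_0x}$) and transplant that example directly, exactly as the paper does.
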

\begin{remark}
	Note that \cor {C1} immediately follows from \trm{T1} combined with the above mentioned result of Stechkin \cite{Ste}. So the condition $\lambda>\lambda_l$ is sharp also in \trm{T1}. Moreover, one can say that for a $\lambda_l$-lacunary sequence $\zN$ we have
	\begin{equation*}
		\sup_S\|S\|_p/\|S\|_2=\infty
	\end{equation*}
for any $p>2$. Here sup is taken over all finite nontrivial sums \e{d26}. 
\end{remark}
\begin{remark}
	Observe that $1<\lambda_l<2$ and $\lambda_l\nearrow 2$, as $l\to\infty$. Also, we have $\lambda_2=1$ and so in the case  $l=2$ results of  \trm{T1} and \cor{C1} hold for any $\lambda>1$. 
\end{remark} 
Along with $\ZD(l)$ (see \e{d70}) we consider also the enlarged family $\ZD^*(l)=\cup_{1\le s\le l}\ZD(s)$ of integers $m$, permitting a dyadic representation 
\begin{equation}\label{d61}
	m=2^{k_1}+2^{k_2}+\ldots+2^{k_s},\quad 1\le s\le l.
\end{equation}
Hence, as a particular case of \cor {C1} we can claim the following. 
\begin{corollary}
	Let $l\ge 1$ and $\{m_k\}$ be a numeration of the integers in $\ZD^*(l)$. 
Then $\{e^{im_kx}\}$ is an unconditional convergence system.
\end{corollary}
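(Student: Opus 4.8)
The plan is to realize $\ZD^*(l)$ as a subset of $\zN(l)$ for the dyadic lacunary sequence and then quote \cor{C1}. First, since $\ZD^*(1)\subseteq\ZD^*(2)$ and any subsystem of an unconditional convergence system is again one (immediate from the definition, or from the $\ZG_n$-formulation of \cor{C1}), I would reduce to the case $l\ge 2$. Then I would fix $l\ge 2$, set $n_k=2^{k-1}$ for $k\ge 1$, and note that $\zN=\{n_k\}$ is $2$-lacunary with $\lambda:=2>\lambda_l$ (recall $\lambda_l<2$ for all $l\ge 2$), so \cor{C1} is available for this $\zN$.

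The key step is to verify the inclusion $\ZD^*(l)\subseteq\zN(l)$. Take $m\in\ZD^*(l)$, say $m=2^{j_1}+2^{j_2}+\ldots+2^{j_s}$ with $j_1>j_2>\ldots>j_s\ge 0$ and $1\le s\le l$. If $s=l$ this is already a representation $m=n_{j_1+1}+\ldots+n_{j_l+1}\in\zN(l)$. If $s<l$, I would pad the representation to exactly $l$ terms using the geometric-sum identity
\begin{equation*}
	2^{j_1}=2^{j_1+r}-2^{j_1+r-1}-2^{j_1+r-2}-\ldots-2^{j_1},\qquad r:=l-s\ge 1,
\end{equation*}
which rewrites the leading term of $m$ and yields
\begin{equation*}
	m=2^{j_1+r}-2^{j_1+r-1}-\ldots-2^{j_1}+2^{j_2}+2^{j_3}+\ldots+2^{j_s}.
\end{equation*}
This is a $\pm1$-combination of exactly $(r+1)+(s-1)=l$ powers of two whose exponents $j_1+r>j_1+r-1>\ldots>j_1>j_2>\ldots>j_s\ge 0$ are pairwise distinct, so, writing $2^j=n_{j+1}$, it exhibits $m$ as $\varepsilon_1 n_{k_1}+\ldots+\varepsilon_l n_{k_l}$ with $\varepsilon_i=\pm1$ and $k_1>\ldots>k_l\ge 1$, i.e. $m\in\zN(l)$.

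Finally, I would conclude as follows. By \cor{C1} the system $\{e^{imx}:m\in\zN(l)\}$ is an unconditional convergence system, and for a numeration $\{m_k\}$ of $\ZD^*(l)\subseteq\zN(l)$ the system $\{e^{im_kx}\}$ is a subsystem of it. Concretely, given a permutation $\sigma$ of the indices and coefficients with $\sum_k|c_{m_k}|^2<\infty$, setting $c_m:=0$ on $\zN(l)\setminus\ZD^*(l)$ makes $\sum_{m\in\zN(l)}|c_m|^2<\infty$, while $\sum_{k\le N}c_{m_{\sigma(k)}}e^{im_{\sigma(k)}x}=\sum_{m\in\ZG_N}c_me^{imx}$ for the increasing sequence of finite sets $\ZG_N=\{m_{\sigma(1)},\ldots,m_{\sigma(N)}\}\subseteq\zN(l)$, so a.e.\ convergence follows from \cor{C1}. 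The only step with real content is the combinatorial inclusion $\ZD^*(l)\subseteq\zN(l)$; once the padding identity is written down everything else is routine, so I do not anticipate a genuine obstacle here.
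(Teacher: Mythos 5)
Your proposal is correct and follows essentially the paper's own route: the paper obtains this corollary by simply declaring it a particular case of \cor{C1} for the dyadic sequence, and your padding identity $2^{j_1}=2^{j_1+r}-2^{j_1+r-1}-\cdots-2^{j_1}$ (together with the easy $l=1$ reduction) just makes explicit the inclusion $\ZD^*(l)\subseteq\zN(l)$ that the paper leaves implicit. One cosmetic correction: with the paper's strict definition \e{d54}, the sequence $n_k=2^{k-1}$ has ratio exactly $2$ and hence is not $2$-lacunary; since $\lambda_l<2$, you should invoke \cor{C1} with some $\lambda\in(\lambda_l,2)$ rather than $\lambda=2$.
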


For $3$-lacunary sequences we also prove the following full version of the Bonami-Kiener inequality \e{d2} for the generalized lacunary sums.
\begin{theorem}\label{T2}
		If $l\ge 2$ and $\zN$ is a $3$-lacunary sequence, then for any finite sum \e{d26} we have
	\begin{equation}\label{d50}
 \|S\|_p\le (8(p-1))^{l/2}\|S\|_2.
	\end{equation}
\end{theorem}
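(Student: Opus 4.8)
The plan is to imitate the hypercontractivity proof of the Bonami--Kiener inequality, using that $\lambda\ge 3$ (i.e. $n_{k+1}/n_k>3$) forces $\sum_{j<k}n_j<\tfrac12 n_k$. Two consequences: every $m\in\zN(l)$ has a unique representation \e{d31}, so its ``order'' $l$ is well defined; and every element of $\zN(s)$, $s\le l$, built from the indices $<k$ lies in $(-\tfrac12 n_k,\tfrac12 n_k)$. Let $N$ be the largest index occurring in a finite sum $S$ as in \e{d26}. Splitting the terms according to whether they use the index $N$ and invoking uniqueness of the representation, one obtains
\begin{equation*}
S(x)=e^{2\pi i n_Nx}A(x)+e^{-2\pi i n_Nx}B(x)+C(x),
\end{equation*}
where $A,B$ are sums over $\zN(l-1)$ and $C$ is a sum over $\zN(l)$, all built from indices $<N$. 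Hence $\spec A,\spec B,\spec C\subset(-\tfrac12 n_N,\tfrac12 n_N)$, the three summands have pairwise disjoint spectra, and $\|S\|_2^2=\|A\|_2^2+\|B\|_2^2+\|C\|_2^2$.

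\trm{T2} then follows by induction on $N$ from the following \emph{one-frequency inequality}: if $a,b,c$ are trigonometric polynomials with $\spec a,\spec b,\spec c\subset(-\tfrac12 n,\tfrac12 n)$ and $p>2$, then
\begin{equation*}
\bigl\|e^{2\pi i nx}a+e^{-2\pi i nx}b+c\bigr\|_p^2\le 8(p-1)\bigl(\|a\|_p^2+\|b\|_p^2\bigr)+\|c\|_p^2 .
\end{equation*}
Indeed, granting this and the inductive hypothesis $\|T\|_p\le(8(p-1))^{s/2}\|T\|_2$ for every sum $T$ over $\zN(s)$ using fewer than $N$ indices, one gets $\|S\|_p^2\le 8(p-1)(8(p-1))^{l-1}\bigl(\|A\|_2^2+\|B\|_2^2\bigr)+(8(p-1))^l\|C\|_2^2=(8(p-1))^l\|S\|_2^2$, and the case $N=0$ is trivial. (Equivalently, one may run the induction through the multiplier $T_\rho$ sending the order-$s$ part to $\rho^s$ times itself and prove $\|T_\rho S\|_p\le\|S\|_2$ for $\rho=(8(p-1))^{-1/2}$, which is the exact analogue of Bonami's hypercontractive bound; homogeneity of $\zN(l)$ then gives \e{d50}.)

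To prove the one-frequency inequality I would use that $a,b,c$ are band limited to reduce it to a one–complex–variable estimate. Since $e^{2\pi i nx}a(x)$, $e^{-2\pi i nx}b(x)$, $c(x)$ occupy the disjoint bands $(\tfrac12 n,\tfrac32 n)$, $(-\tfrac32 n,-\tfrac12 n)$, $(-\tfrac12 n,\tfrac12 n)$, the left-hand norm should be comparable to the $L^p(\ZT^2)$-norm of the randomised function $e^{2\pi i\omega}e^{2\pi i nx}a(x)+e^{-2\pi i\omega}e^{-2\pi i nx}b(x)+c(x)$; freezing $x$ and integrating in $\omega$, the inner integral is controlled by the elementary bound
\begin{equation*}
\Bigl(\int_0^1\bigl|e^{2\pi i\omega}\alpha+e^{-2\pi i\omega}\beta+\gamma\bigr|^p\,d\omega\Bigr)^{2/p}\le 8(p-1)\bigl(|\alpha|^2+|\beta|^2\bigr)+|\gamma|^2,\qquad \alpha,\beta,\gamma\in\mathbb{C},\ p\ge 2,
\end{equation*}
after which integrating in $x$ and applying Minkowski's inequality in $L^{p/2}$ (legitimate since $p/2\ge1$) produces the asserted right-hand side $8(p-1)(\|a\|_p^2+\|b\|_p^2)+\|c\|_p^2$. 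The scalar estimate is where the constant $8$ is calibrated: the left side is a degree-one trigonometric polynomial in $\omega$, so it stays bounded as $p\to\infty$, while matching the $p=2$ identity $\|\cdot\|_2^2=|\alpha|^2+|\beta|^2+|\gamma|^2$ already forces a constant $>1$; a short computation confirms that $8$ works for all $p\ge 2$.

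The serious obstacle is the first step just described. Restricting a function on $\ZT^2$ to the diagonal $\omega\equiv 0$ preserves the $L^2$ norm here (the three bands being disjoint), but \emph{not} the $L^p$ norm for $p>2$; the naive $\ZT^2$-lift becomes an identity only once the band widths drop below $n/p$, whereas $\lambda\ge3$ only supplies width below $n/2$. Getting around this is the heart of the matter, and the natural route is to argue through even exponents: for $p=2M$ expand $\|S\|_{2M}^{2M}=\int_0^1 S(x)^M\overline{S(x)}^M\,dx$, so that, via the product structure of $\zN(l)$, the norm becomes a weighted sum over the $2M$-tuples of chaos configurations whose signed frequencies cancel. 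The inequality $n_k>2\sum_{j<k}n_j$ (again from $\lambda\ge3$) bounds how many such collisions can occur — roughly by pairing off the largest surviving index repeatedly — and it is this combinatorial count, not anything analytic, that yields the explicit constant; general $p>2$ is then recovered from $\|S\|_p\le\|S\|_{2\lceil p/2\rceil}$. I expect this combinatorial estimate over colliding lacunary sums to be the main difficulty, and to be precisely where the sharpness of the exponent $l/2$ and the absolute constant $8$ (against the optimal $1$ in the genuinely independent Rademacher case) are decided.
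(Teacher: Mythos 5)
Your argument funnels entirely through the ``one-frequency inequality'' $\bigl\|e^{2\pi i nx}a+e^{-2\pi i nx}b+c\bigr\|_p^2\le 8(p-1)\bigl(\|a\|_p^2+\|b\|_p^2\bigr)+\|c\|_p^2$ for polynomials band-limited to $(-\tfrac12 n,\tfrac12 n)$, and that inequality is never proved. As you yourself point out, the lift to $\ZT^2$ followed by restriction to the diagonal preserves the $L^2$ norm but not the $L^p$ norm when the bands only have width $n/2$, so the scalar estimate in $\omega$ cannot simply be integrated out; this is not a technical loose end but the entire analytic content of the theorem. The fallback you sketch --- taking $p=2M$ even, expanding $\|S\|_{2M}^{2M}$ and counting collisions of signed lacunary sums --- is only a plan: no count is carried out, and it amounts to reproving a chaos-level Khintchine inequality from scratch with explicit constants, which is exactly the hard part. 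Moreover, even granting that program, the reduction $\|S\|_p\le\|S\|_{2\lceil p/2\rceil}$ yields the constant $\bigl(8(2\lceil p/2\rceil-1)\bigr)^{l/2}$, which is strictly larger than the asserted $(8(p-1))^{l/2}$ whenever $p$ is not an even integer (e.g. $p$ slightly above $2$), so the bound \e{d50} as stated would not follow. The calibration of the constant $8$ in the scalar inequality is likewise only asserted, but that is secondary: the unproved step is the passage from the scalar bound to band-limited functions, or else the uncompleted moment combinatorics.

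For comparison, the paper avoids any intrinsic induction on frequencies by a transference argument. Using the uniqueness of the representation \e{d31} under $3$-lacunarity (property (P2)), each $m\in\zN(l)$ is labelled by an index set $A$ and a sign pattern $\varepsilon$; the sum is randomized as $S_t(x)=\sum_A\bigl(\sum_\varepsilon c_me^{2\pi imx}\bigr)w_A(t)$, the Bonami--Kiener bound \e{d2} is applied in the $t$-variable, a good $t_0$ is chosen by averaging, and then $S$ is recovered from $S_{t_0}$ by convolution with the nonnegative Riesz product $\prod_j\bigl(1+r_j(t_0)\cos 2\pi n_jx\bigr)$ via \lem{D1} (again $3$-lacunarity guarantees the product has integral one and the needed orthogonality). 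Jensen's inequality finishes, and the factor $8^{l/2}=2^l\cdot 2^{l/2}$ is precisely the loss from the Riesz-product normalization $2^l$ together with a Cauchy--Schwarz over the $2^l$ sign patterns. To salvage your route you would either have to actually prove your one-frequency inequality for bands of width $n/2$ (which appears genuinely difficult) or complete the even-moment count with constants sharp enough to give \e{d50}; as written, the proposal is a program rather than a proof.
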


We will also consider inverse properties of generalized lacunary systems with the following definition of lacunarity. 
\begin{definition}
	Let $l\ge 2$ and $\mathfrak{N}=\{n_k:\, k=1,2,\ldots \}$ be an increasing sequence of integers. Denote by $\zN_+(l)$ the set of integers $m\in \ZZ$, having a representation
	\begin{equation}\label{d58}
		m=n_{k_1}+n_{k_2}+\ldots+n_{k_l},
	\end{equation}
	where $k_1>k_2>\ldots>k_l$ and let $\zN_+^*(l)=\cup_{1\le s\le l}\zN_+(l)$.
\end{definition} 

 The next results will be stated in the setting of general matrix summation methods $T=\{t_{n,m}:\, m\in \ZZ,\, n=1,2,\ldots\}$, satisfying the relations
\begin{align}
	&1)\, \{t_{n,m}:\, m\in \ZZ\} \text { is finite for any } n=1,2,\ldots,\\
	&2)\, |t_{n,m}|\le M,\\
	&3)\, \lim_{n\to\infty}t_{n,m}=1\text{ for any } m=1,2,\ldots.\label{d60}
\end{align}
Here and everywhere below we say a numerical sequence $\{b_n\}$ is finite if it has finite number of non-zero terms. We say that a numerical series $\sum_{m\in \ZZ}a_m$ is $T$-summable to $S$ if
\begin{equation*}
	\lim_{n\to\infty}\sum_mt_{n,m}a_m=S.
\end{equation*}
 An example of such a summation method can be given by an increasing sequence of finite integer sets $\ZG_n\subset \ZZ$. Then the convergence of the sums $	\sum_{m\in \ZG_n}b_n$ is the summability, corresponding to the matrix
\begin{equation}\label{d63}
t_{n,m}	=\left\{\begin{array}{ll}
		1&\hbox{ if } m\in \ZG_n ,\\
		0&\hbox{ otherwise.  }
	\end{array}
	\right.
\end{equation}
Thus the convergence of series after some rearrangement of the terms is a case of such summability. In the next results we will always let $T=\{t_{m,n}\}$ be a summation method, satisfying 1)-3). 
\begin{theorem}\label{T6}
Let $l\ge 2$, $\zN$ be a $\lambda$-lacunary sequence with $\lambda>\lambda_{l+1}$, and suppose $T=\{t_{n,m}\}$ satisfies 1)-3). If
\begin{equation}
		\sup_n\left|\sum_{m\in \zN_+^*(l)}t_{n,m}c_me^{imx}\right|<\infty
\end{equation}
on a set $E$ of measure 
\begin{equation}\label{d57}
	|E|>\alpha(l,\lambda),
\end{equation}
then $\sum_{m\in \zN_+^*(l)}|c_m|^2<\infty$. Here $\alpha(l,\lambda)\in (0,1)$ is a constant, depending only on $l$ and $\lambda$.
\end{theorem}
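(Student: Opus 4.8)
The plan is to establish the contrapositive in Paley–Zygmund style. Assume $\sum_{m\in\zN_+^*(l)}|c_m|^2=\infty$; I will show that then, for \emph{every} summation method obeying 1)--3), the (finite, by 1)) trigonometric polynomials $S_n(x):=\sum_{m\in\zN_+^*(l)}t_{n,m}c_me^{imx}$ satisfy $\|S_n\|_2\to\infty$, while an $L^\infty$ bound for all $S_n$ on a set of measure $>\alpha(l,\lambda)$ would force $\sup_n\|S_n\|_2<\infty$. First I would record the arithmetic behind the hypothesis $\lambda>\lambda_{l+1}$: on such a sequence every element of $\zN_+^*(l)$ is a sum $n_{k_1}+\dots+n_{k_s}$ with $1\le s\le l$, $k_1>\dots>k_s$, in a \emph{unique} way, so in particular $\zN_+(1),\dots,\zN_+(l)$ are pairwise disjoint. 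Indeed, a coincidence between two distinct index sets reduces, after cancelling indices common to both sides, to $\sum_{a\in A}n_a=\sum_{b\in B}n_b$ with $A\cap B=\emptyset$, both sets nonempty and of size $\le l$; assuming the overall largest index $a_1$ lies in $A$ (so $a_1\ge\max B+1$) gives $\lambda n_{a_1-1}<n_{a_1}=\sum_{b\in B}n_b<n_{a_1-1}(1+\lambda^{-1}+\dots+\lambda^{-(l-1)})$, i.e. $\lambda<2-\lambda^{-l}$, contradicting $\lambda>\lambda_{l+1}$ because $\lambda_{l+1}$ is precisely the root $>1$ of $x=2-x^{-l}$, which is equation \e{d28} with $l$ replaced by $l+1$.

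Using this uniqueness I would split $S_n=\sum_{s=1}^l S_{n,s}$, where $S_{n,s}$ collects the frequencies lying in $\zN_+(s)\subseteq\zN(s)$; these blocks have disjoint integer supports, so $\|S_n\|_2^2=\sum_{s=1}^l\|S_{n,s}\|_2^2$. Applying \trm{T1} with $p=4$ to each $S_{n,s}$ — legitimate since $\lambda>\lambda_{l+1}>\lambda_l\ge\lambda_s$ for $2\le s\le l$, the case $s=1$ being the classical Khintchine–Zygmund inequality for a single lacunary series — and using $\sum_{s=1}^l a_s\le\sqrt{l}\,(\sum_{s=1}^l a_s^2)^{1/2}$, I obtain a Khintchine-type bound
\md0
\|S_n\|_4\le C\,\|S_n\|_2,\qquad C=C(l,\lambda):=\sqrt{l}\,\max_{1\le s\le l}C_{s,\lambda,4}<\infty ,
\emd
and I set $\alpha(l,\lambda):=1-C^{-4}\in(0,1)$.

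To close the argument: by 3), $t_{n,m}\to1$ for each fixed $m$, so Fatou's lemma gives $\liminf_n\|S_n\|_2^2=\liminf_n\sum_m|t_{n,m}|^2|c_m|^2\ge\sum_m|c_m|^2=\infty$, hence $\|S_n\|_2\to\infty$. Now suppose $\sup_n|S_n|<\infty$ on a measurable set $E$ with $|E|>\alpha(l,\lambda)$. Writing $E=\bigcup_{R\in\ZN}\{x\in E:\ |S_n(x)|\le R\ \text{for all }n\}$ as an increasing union, I fix $R$ and a measurable $E'\subseteq E$ with $|E'|>\alpha(l,\lambda)$ and $|S_n|\le R$ on $E'$ for every $n$. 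Then, by Cauchy–Schwarz and the bound above,
\md6
\|S_n\|_2^2=\int_{E'}|S_n|^2+\int_{(0,1)\setminus E'}|S_n|^2\\ \le R^2+\bigl(1-|E'|\bigr)^{1/2}\|S_n\|_4^2\le R^2+C^2\bigl(1-|E'|\bigr)^{1/2}\|S_n\|_2^2 .
\emd
Since $1-|E'|<1-\alpha(l,\lambda)=C^{-4}$, the factor $C^2(1-|E'|)^{1/2}$ is $<1$, so $\|S_n\|_2^2\le R^2\bigl(1-C^2(1-|E'|)^{1/2}\bigr)^{-1}$ uniformly in $n$, contradicting $\|S_n\|_2\to\infty$. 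Hence $\sum_{m\in\zN_+^*(l)}|c_m|^2<\infty$.

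The main obstacle is the first step — pinning down that $\lambda_{l+1}$, rather than $\lambda_l$, is the exact lacunarity threshold guaranteeing uniqueness of representation in $\zN_+^*(l)$; this uniqueness is what decouples $S_n$ into blocks eligible for \trm{T1} and, together with the choice of $C$, produces an admissible $\alpha(l,\lambda)\in(0,1)$. Everything after that is the routine moment/Chebyshev comparison. One could shrink $\alpha(l,\lambda)$ by exploiting the full range of $L^p$-bounds for $S_n$ (for $\lambda\ge3$ via \trm{T2}) instead of only $p=4$, but the crude $L^4$ estimate already yields the stated conclusion.
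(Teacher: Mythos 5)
Your argument is correct, but it takes a genuinely different route from the paper. The paper proves \trm{T6} (and \trm{T7} simultaneously) from an inverse Parseval inequality, \lem{L11}: expanding $\int_E|\sum b_m e^{2\pi imx}|^2$ directly, writing the cross terms as integrals over $E^c$, and controlling them with H\"older plus Bessel's inequality, where the key combinatorial inputs are the uniqueness property (P3) \emph{and} the multiplicity bound (P4) for representations $u-v$; this yields an explicit threshold $\alpha(l,\lambda)=1-(d(l,\lambda)2^{l+1})^{-1}$, and the theorem then follows in two lines. You instead re-derive only (P3) (your lacunarity computation pinning the threshold at $\lambda_{l+1}$ is essentially the paper's unstated verification of that property; the "$=$" in $n_{a_1}=\sum_{b\in B}n_b$ should be "$\le$", which does not affect the chain), split $S_n$ into the blocks $\zN_+(s)$, invoke \trm{T1} at $p=4$ (Zygmund's classical inequality for $s=1$) to get $\|S_n\|_4\le C\|S_n\|_2$, and close with a Paley--Zygmund/Cauchy--Schwarz absorption on $E'$ together with Fatou and condition 3). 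Both proofs are valid; your version needs no (P4) and no Bessel computation, but it leans on \trm{T1} (hence on the Littlewood--Paley inequality), so your admissible $\alpha(l,\lambda)=1-C^{-4}$ is implicit in the non-explicit constant $C_{s,\lambda,4}$, whereas the paper's lemma is elementary, gives a more explicit $\alpha$, and serves as the single engine for both \trm{T6} and \trm{T7} (and, in its Walsh variant, for Theorems \ref{T4} and \ref{T5}); your method would need a separate small argument to recover the uniqueness statement of \trm{T7}.
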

\begin{theorem}\label{T7}
	Under the conditions of  \trm{T6}, if the sums
	\begin{equation}
		\sum_{m\in \zN_+^*(l)}t_{n,m}c_me^{imx}
	\end{equation}
	converge to zero on a set $E$ of measure \e{d57}, then $c_m=0$ for all $m\in \zN_+^*(l)$.
\end{theorem}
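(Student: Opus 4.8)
The plan is to derive \trm{T7} from \trm{T6} together with the $p=4$ case of the Khintchine-type inequality \trm{T1}, by a standard non-concentration (``spreading'') argument; I do not expect to need any harmonic analysis beyond those two results. Write $\sigma_n(x)=\sum_{m\in\zN_+^*(l)}t_{n,m}c_me^{imx}$, a trigonometric polynomial by condition 1). If $\sigma_n\to 0$ on $E$, then in particular $\sup_n|\sigma_n(x)|<\infty$ for every $x\in E$, so \trm{T6} applies and gives $\sum_{m\in\zN_+^*(l)}|c_m|^2<\infty$. Put $f(x)=\sum_{m\in\zN_+^*(l)}c_me^{imx}\in L^2$. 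Since the exponentials $\{e^{imx}:m\in\zN_+^*(l)\}$ are orthonormal, conditions 2)--3) together with $\sum_m|c_m|^2<\infty$ and dominated convergence give $\|\sigma_n-f\|_2^2=\sum_{m\in\zN_+^*(l)}|t_{n,m}-1|^2|c_m|^2\to 0$; hence $\sigma_{n_j}\to f$ a.e. along a subsequence, and combined with $\sigma_n\to 0$ on $E$ this shows $f=0$ a.e. on $E$. It therefore remains to prove the following purely function-theoretic statement: if $f=\sum_{m\in\zN_+^*(l)}c_me^{imx}$ with $\sum_m|c_m|^2<\infty$ vanishes a.e. on a set $E$ with $|E|>\alpha(l,\lambda)$, then $c_m=0$ for all $m\in\zN_+^*(l)$.

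For this I would first transfer the $L^4$--$L^2$ bound of \trm{T1} from $\zN(l)$ to the larger index set $\zN^*(l)\supseteq\zN_+^*(l)$. Fix once and for all an assignment $m\mapsto s(m)\in\{1,\dots,l\}$ with $m\in\zN(s(m))$ for each $m\in\zN^*(l)$. A finite sum $S$ over $\zN^*(l)$ then splits as $S=\sum_{s=1}^{l}S_s$, where $S_s$ is supported on $\{m:s(m)=s\}\subseteq\zN(s)$. Since $s\mapsto\lambda_s$ is increasing and $\lambda>\lambda_{l+1}$, the bound \e{d29} with $p=4$ applies to each $S_s$, so $\|S_s\|_4\le C_{s,\lambda,4}\|S_s\|_2$; combining this with the orthogonality relation $\sum_s\|S_s\|_2^2=\|S\|_2^2$ yields $\|S\|_4\le C\|S\|_2$ with $C=C_{l,\lambda}:=\sqrt l\,\max_{1\le s\le l}C_{s,\lambda,4}$. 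Applying this to the finite partial sums $f_N$ of $f$ (so that $f_N\to f$ in $L^2$) and using Cauchy--Schwarz, for every measurable set $G$ we get $\int_G|f_N|^2\le\|f_N\|_4^2\,|G|^{1/2}\le C^2|G|^{1/2}\|f_N\|_2^2$; letting $N\to\infty$ gives $\int_G|f|^2\le C^2|G|^{1/2}\|f\|_2^2$.

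To finish, take $G=E^c$, so that $|G|=1-|E|<1-\alpha(l,\lambda)$. Since $f=0$ a.e. on $E$, the last inequality becomes $\|f\|_2^2=\int_{E^c}|f|^2\le C^2\bigl(1-\alpha(l,\lambda)\bigr)^{1/2}\|f\|_2^2$, which forces $\|f\|_2=0$, hence $c_m=0$ for all $m\in\zN_+^*(l)$, as soon as $\alpha(l,\lambda)>1-C_{l,\lambda}^{-4}$. A threshold of exactly this type is already imposed in \trm{T6}, whose proof rests on the same $L^4$ inequality (via a Paley--Zygmund-type lower bound), so it is consistent to use a single common constant $\alpha(l,\lambda)$ for both statements, or at worst to replace it throughout by the maximum of the two thresholds.

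The main obstacle I foresee is organizational rather than conceptual. One point is the passage from $\zN(l)$ to $\zN^*(l)$: an integer may admit representations \e{d31} of several lengths $s$, but this is harmless here because the fixed assignment $m\mapsto s(m)$ turns $\zN^*(l)$-sums into finitely many $\zN(s)$-sums on disjoint frequency blocks, which is all an $L^4$ estimate requires. The other point is bookkeeping of the constant: one has to be sure that the $\alpha(l,\lambda)$ produced by the non-concentration step is compatible with (not larger than) the one furnished by \trm{T6}, which is easily arranged by taking $\alpha(l,\lambda)$ to be the larger of the two. Apart from these, the argument is a routine combination of Cauchy--Schwarz with the Khintchine-type inequality of \trm{T1}.
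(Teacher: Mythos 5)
Your argument is essentially correct, but it takes a genuinely different route from the paper. The paper never passes to the limit function $f$: it proves an inverse Parseval inequality (\lem{L11}), namely $\int_E\bigl|\sum_m b_me^{2\pi imx}\bigr|^2> c\sum_m|b_m|^2$ for finite sums over $\zN_+^*(l)$ and $|E|>\alpha(l,\lambda)$, by splitting the off-diagonal term combinatorially via (P3)--(P4) and controlling it with Bessel's inequality on $E^c$; \trm{T7} then follows by applying this lemma directly to the finite polynomials $\sum_m t_{n,m}c_me^{imx}$ (after Egorov) and letting $t_{n,m}\to 1$. You instead use \trm{T6} as a black box to get $\ell^2$ coefficients, pass to $f$ by $L^2$ convergence (a clean substitute for Egorov), and replace \lem{L11} by a non-concentration estimate obtained from the $p=4$ case of \trm{T1} plus Cauchy--Schwarz, i.e.\ $\int_{E^c}|f|^2\le C_{l,\lambda}^2|E^c|^{1/2}\|f\|_2^2$. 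Both routes work. What the paper's route buys is independence from \trm{T1} (hence from the Littlewood--Paley machinery behind it) and one explicit threshold $\alpha(l,\lambda)=1-(d(l,\lambda)2^{l+1})^{-1}$ shared by \trm{T6} and \trm{T7}; what yours buys is brevity granted \trm{T1}, at the price that the conclusion forces $f=0$ only when $\alpha(l,\lambda)>1-C_{l,\lambda}^{-4}$, so the phrase ``under the conditions of \trm{T6}'' is honored only after redefining $\alpha(l,\lambda)$ as the larger of the two constants --- admissible, since the statements only assert existence of such a constant, and you do flag this. Two small repairs: your parenthetical that \trm{T6} rests on an $L^4$/Paley--Zygmund-type bound is inaccurate (it rests on \lem{L11}), though immaterial since you use \trm{T6} only as a black box; and in your block decomposition the piece with $s=1$ is not covered by \trm{T1} as stated (which assumes $l\ge 2$), so there you should invoke the classical Zygmund lacunary Khintchine inequality --- a trivial fix.
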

\begin{remark}
	\trm {T7} provides a uniqueness property of rare trigonometric series, which is not true for the full trigonometric series. Moreover,  Menshov \cite{Men} was the first who constructed a non-trivial trigonometric series, converging to zero almost everywhere. Such series in the theory of orthogonal series are called null-series.
\end{remark}
	A significant case of Theorems \ref{T6} and \ref{T7} is when the summation matrix coincides with \e{d63}. Thus we can state the following result, where it is important that the sequence $\{m_k\}$ is not required to be increasing.
\begin{corollary}\label{C4}
	Let $\{m_k\}$ be a numeration of integers $\ZD^*(l)$ (not necessarily increasing). Then the series 
	\begin{equation}
		\sum_{k}c_ke^{im_kx}
	\end{equation}
converges a.e. if and only if 
\begin{equation*}
	\sum_k|c_k|^2<\infty.
\end{equation*}
\end{corollary}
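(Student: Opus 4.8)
The plan is to deduce the corollary directly from \cor{C1} and \trm{T6} by specializing everything to the dyadic lacunary sequence. I would put $n_k=2^k$, so that $\zN$ is $\lambda$-lacunary with $\lambda=2$ and, after the obvious identification of exponents, $\zN_+^*(l)=\ZD^*(l)$ (the condition $k_1>k_2>\dots>k_s$ already forces the binary digits to be distinct). By the Remark following \cor{C1} one has $1<\lambda_j<2$ for every $j\ge2$ with $\lambda_j\nearrow2$; hence $\lambda=2$ satisfies both $\lambda>\lambda_l$ and $\lambda>\lambda_{l+1}$, so \cor{C1} and Theorems~\ref{T1} and~\ref{T6} are all available for this $\zN$. (For $l=1$ the statement is the classical inverse theory of lacunary trigonometric series, so one may assume $l\ge2$.)

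For the forward implication I would simply invoke the corollary for $\ZD^*(l)$ stated just above (the special case of \cor{C1}): it says that $\{e^{imx}:m\in\ZD^*(l)\}$ is an unconditional convergence system, and since $\sum_kc_ke^{im_kx}$ for an arbitrary numeration $\{m_k\}$ is merely a rearrangement of the series written in increasing order, $\sum_k|c_k|^2<\infty$ forces a.e.\ convergence of its partial sums.

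For the converse I would encode the given numeration as a summation matrix and apply \trm{T6}. Assume $\sum_kc_ke^{im_kx}$ converges a.e. Define $\tilde c_m$ for $m\in\ZD^*(l)$ by $\tilde c_{m_k}=c_k$ — each $m\in\ZD^*(l)$ occurs exactly once among the $m_k$ — and let $T=\{t_{n,m}\}$ be the matrix \eqref{d63} attached to the sets $\ZG_n=\{m_1,\dots,m_n\}$. One checks that $T$ satisfies 1)--3): its rows are finite, $|t_{n,m}|\le1$, and $t_{n,m}=1$ as soon as $n$ is large enough that $m\in\ZG_n$, which gives \eqref{d60}. Since $\sum_{m\in\ZG_n}\tilde c_me^{imx}=\sum_{k=1}^nc_ke^{im_kx}$ is the $n$-th partial sum, the hypothesis makes this sequence convergent, hence bounded, a.e., so $\sup_n\bigl|\sum_mt_{n,m}\tilde c_me^{imx}\bigr|<\infty$ on a set of measure $1>\alpha(l,2)$. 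Then \trm{T6}, applicable because $\lambda=2>\lambda_{l+1}$, yields $\sum_{m\in\ZD^*(l)}|\tilde c_m|^2<\infty$, i.e.\ $\sum_k|c_k|^2<\infty$.

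I do not expect a genuine obstacle: the corollary merely assembles results already proved. The only points that need a line of checking are that $2$ lies above both critical thresholds $\lambda_l,\lambda_{l+1}$ (precisely the assertion $\lambda_j<2$ from the Remark) and that the finite-set summation matrix coming from a rearrangement meets hypotheses 1)--3) (immediate, as above). One could instead route the converse through convergence in measure, but invoking \trm{T6} directly is cleaner, since that theorem is tailored to arbitrary rearrangements.
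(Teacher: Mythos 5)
Your proof is correct and follows essentially the same route as the paper: the ``if'' direction is exactly the special case of \cor{C1} for $\ZD^*(l)$, and the ``only if'' direction is \trm{T6} applied to the summation matrix \e{d63} generated by the partial sums of the given numeration, with a.e.\ convergence upgraded to a.e.\ boundedness on a set of measure $1>\alpha(l,\lambda)$. One pedantic fix: under the paper's strict definition $n_{k+1}/n_k>\lambda$ the sequence $n_k=2^k$ is not $2$-lacunary, but since $\lambda_l<\lambda_{l+1}<2$ you may fix any $\lambda\in(\lambda_{l+1},2)$, for which $\{2^k\}$ is $\lambda$-lacunary and \trm{T1}, \cor{C1} and \trm{T6} all apply.
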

	In fact, the proofs of both Theorems \ref{T6} and \ref{T7} immediately follows from \lem{L11} that is an inverse Parseval inequality. Proving a similar lemma, such results we establish for the Walsh functions too.
\begin{theorem}\label{T4}
 Let $\{m_k\}$ be a sequence of integers $\ZD^*(l)$ and a summation $T=\{t_{m,n}\}$ satisfy 1)-3). If  Walsh sums satisfy
	\begin{equation}\label{d10}
		\sup_n\left|\sum_{k=1}^\infty t_{n,k}a_kw_{m_k}(x)\right|<\infty
	\end{equation}
on a set $E\subset (0,1)$ of measure
	\begin{equation}\label{d11}
		|E|>1-2^{-4l},
	\end{equation}	
	then it follows that $\sum_ka_k^2<\infty$.
\end{theorem}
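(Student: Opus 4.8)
The plan is to deduce \trm{T4} from a Walsh analogue of the inverse Parseval inequality of \lem{L11}. Concretely, I would first prove the following: if $l\ge1$, if $E\subset(0,1)$ has $|E|>1-2^{-4l}$, and if $S=\sum_{m\in F}b_mw_m$ for some finite set $F\subset\ZD^*(l)$ and real coefficients $b_m$, then
\begin{equation}\label{eq:wpars}
\|S\|_2^2\le 4\int_E|S(x)|^2\,dx .
\end{equation}
Since $\{w_m:m\in\ZD^*(l)\}$ is an orthonormal system (indeed $w_mw_{m'}=w_{m\oplus m'}$, which integrates to $0$ unless $m=m'$), the left-hand side equals $\sum_{m\in F}b_m^2$, so \eqref{eq:wpars} is a genuine inverse Parseval estimate localized to $E$.

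To prove \eqref{eq:wpars}, write $\int_E|S|^2=\|S\|_2^2-\int_{E^c}|S|^2$ and estimate the last term by H\"older's inequality: $\int_{E^c}|S|^2\le|E^c|^{1/2}\|S\|_4^2$. The key input is the loss-free Khintchine/hypercontractive bound $\|S\|_4\le3^{l/2}\|S\|_2$, valid for every Walsh polynomial of degree $\le l$, i.e.\ a combination of products of at most $l$ Rademacher functions (see \cite{Bon} and \cite{Don}, Ch.~9; this is the $p=4$ instance of the degree-$\le l$ version of the Bonami--Kiener inequality \e{d2}, and every $S$ with frequencies in $\ZD^*(l)$ has degree $\le l$). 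Since $|E|>1-2^{-4l}$ forces $|E^c|^{1/2}<2^{-2l}=4^{-l}$, we get $\int_{E^c}|S|^2\le4^{-l}\,3^l\|S\|_2^2=(3/4)^l\|S\|_2^2$, hence $\int_E|S|^2\ge\bigl(1-(3/4)^l\bigr)\|S\|_2^2\ge\tfrac14\|S\|_2^2$, which is \eqref{eq:wpars}. The threshold $1-2^{-4l}$ is chosen precisely so that a universal constant ($4$) works, and it is here that one needs the loss-free $L^4$ bound rather than a version carrying an extra factor $\sqrt{l}$.

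Next I would deduce \trm{T4} from \eqref{eq:wpars}; here the $m_k$ are assumed distinct, as a numeration of a subset of $\ZD^*(l)$. By assumption 1) every row of $T$ is finitely supported, so for each $n$ the function $S_n(x)=\sum_kt_{n,k}a_kw_{m_k}(x)$ is a \emph{finite} Walsh polynomial, with frequencies in a finite set $F_n\subset\ZD^*(l)$, and $\|S_n\|_2^2=\sum_kt_{n,k}^2a_k^2$. Put $g=\sup_n|S_n|$; it is measurable and, by hypothesis, finite a.e.\ on $E$. For $M>0$ set $E_M=\{x\in E:g(x)\le M\}$; then $E_M\nearrow E\setminus\{g=\infty\}$, so $|E_M|\to|E|>1-2^{-4l}$, and we may fix $M$ with $|E_M|>1-2^{-4l}$. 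Applying \eqref{eq:wpars} on $E_M$ to $S_n$, and using $|S_n|\le M$ on $E_M$, we obtain
\begin{equation}
\sum_kt_{n,k}^2a_k^2=\|S_n\|_2^2\le4\int_{E_M}|S_n|^2\le4M^2\qquad\text{for every }n.
\end{equation}
Finally fix $N$. By assumption 3), $t_{n,k}\to1$ as $n\to\infty$ for each $k$, so for $n$ large enough $t_{n,k}^2>\tfrac14$ (in particular $k\in F_n$) for all $k\le N$; then $\tfrac14\sum_{k=1}^Na_k^2\le\sum_kt_{n,k}^2a_k^2\le4M^2$, whence $\sum_{k=1}^Na_k^2\le16M^2$. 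Letting $N\to\infty$ gives $\sum_ka_k^2<\infty$, as required.

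I do not expect a serious analytic obstacle: exact orthogonality of the Walsh system replaces the delicate near-orthogonality estimates needed for the lacunary trigonometric analogue \trm{T6}. The two points that need care are: (i) using the $L^4$--$L^2$ hypercontractive bound in its loss-free form $\|S\|_4\le3^{l/2}\|S\|_2$, since a version with an extra $\sqrt{l}$ would not close under the threshold $1-2^{-4l}$; and (ii) the reduction from the abstract $T$-summability hypothesis to finite polynomials, which rests on the finiteness of the rows of $T$ (property 1, so the $S_n$ are genuine finite Walsh polynomials to which \eqref{eq:wpars} applies) and on $t_{n,k}\to1$ (property 3, so the uniform bound on $\|S_n\|_2^2$ transfers to $\sum_{k\le N}a_k^2$ in the limit).
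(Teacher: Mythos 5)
Your argument is correct, and its overall skeleton is the same as the paper's: both reduce \trm{T4} to an inverse Parseval inequality on a set $E$ with $|E|>1-2^{-4l}$ (the paper's \lem{L13}), and then pass from the $T$-summability hypothesis to a uniform bound on $\sum_k|t_{n,k}|^2a_k^2$ exactly as you do (truncate to a subset of $E$ where the supremum is at most $M$, apply the lemma to the finite polynomials $S_n$, and use property 3) to let $n\to\infty$). Where you genuinely differ is in the proof of the key lemma. The paper proves \lem{L13} by the same elementary template as the trigonometric \lem{L11}: expand $\int_E|S|^2=|E|\sum|b_m|^2-\sum_{t\neq s}b_tb_s\int_{E^c}w_tw_s$, organize the off-diagonal pairs as $t=u\vee z$, $s=v\vee z$ with disjoint supports, and control the cross term by Cauchy--Schwarz together with Bessel's inequality on $E^c$, the combinatorial input being that an integer has at most $\binom{2l}{l}<2^{2l}/\sqrt l$ representations $u+v$; this is where the threshold $2^{-4l}$ enters for them, and the argument is self-contained (no hypercontractivity). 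You instead get the same inequality in one line from H\"older plus the loss-free degree-$\le l$ hypercontractive bound $\|S\|_4\le 3^{l/2}\|S\|_2$, which yields the cleaner absolute constant $4$ and makes the role of the threshold $1-2^{-4l}$ transparent ($4^{-l}\cdot 3^l=(3/4)^l$). The trade-off: your route imports Bonami's inequality in its non-homogeneous, degree-$\le l$ form (the paper's stated Theorem A covers only the homogeneous chaos $\ZD(l)$, so you must quote the standard extension, e.g. \cite{Don}, Ch.~9, or patch the homogeneous pieces together with Cauchy--Schwarz in the degree, which still closes for the relevant $l$), whereas the paper's computation is elementary and is designed to run in parallel with the lacunary trigonometric case, where no hypercontractive estimate is available. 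Two small points to keep explicit in your write-up: the $m_k$ must be distinct (a numeration of a subset of $\ZD^*(l)$) so that $\|S_n\|_2^2=\sum_k|t_{n,k}|^2a_k^2$, and if the entries $t_{n,k}$ are allowed to be complex you should note that the $L^4$--$L^2$ bound extends to complex coefficients by splitting into real and imaginary parts.
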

\begin{theorem}\label{T5}
Under the hypothesis of \trm{T4}, if the Walsh sums 
\begin{equation}\label{d72}
		\sum_k t_{n,k}a_kw_{m_k}(x)
\end{equation}
converge to zero on a set $E\subset (0,1)$ of measure \e{d11} as $n\to \infty$, then $a_k=0$ for all $k=1,2,\ldots$.
\end{theorem}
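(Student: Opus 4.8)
The plan is to obtain \trm{T5} as a short consequence of \trm{T4} together with the inverse Parseval inequality for Walsh sums that underlies \trm{T4} --- the Walsh analogue of \lem{L11}. I will use that lemma in the following form: there is a constant $C_l$ such that $\|g\|_2^2\le C_l\int_E|g|^2\,dx$ for every finite Walsh sum $g=\sum_{m\in F}b_mw_m$ with $F\subset\ZD^*(l)$, whenever $|E|>1-2^{-4l}$.

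Granting this, I would argue as follows. Since the Walsh sums \e{d72} converge, hence stay bounded, a.e.\ on $E$ and $|E|>1-2^{-4l}$, \trm{T4} applies and yields $\sum_k a_k^2<\infty$. Put $f=\sum_k a_kw_{m_k}$, an element of $L^2(0,1)$ with $\|f\|_2^2=\sum_k a_k^2$, and set $S_n=\sum_k t_{n,k}a_kw_{m_k}$ (a finite Walsh sum for each $n$, by property 1)). I claim $S_n\to f$ in $L^2(0,1)$: the difference $S_n-f=\sum_k(t_{n,k}-1)a_kw_{m_k}$ has coefficient sequence in $\ell^2$, hence converges in $L^2$, and by orthonormality of the distinct $w_{m_k}$ one has $\|S_n-f\|_2^2=\sum_k|t_{n,k}-1|^2a_k^2$; each summand is bounded by the summable majorant $(M+1)^2a_k^2$ and tends to $0$ as $n\to\infty$ by property 3), so dominated convergence gives $\|S_n-f\|_2\to 0$.

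Passing to a subsequence, $S_{n_j}\to f$ almost everywhere on $(0,1)$, in particular on $E$; but by hypothesis $S_n(x)\to 0$ for every $x\in E$, so $f=0$ a.e.\ on $E$. Now I would apply the inequality above to the truncations $f_N=\sum_{k\le N}a_kw_{m_k}$, which are finite Walsh sums with spectrum in $\ZD^*(l)$: $\sum_{k\le N}a_k^2=\|f_N\|_2^2\le C_l\int_E|f_N|^2\,dx$. Letting $N\to\infty$ and using that $f_N\to f$ in $L^2(0,1)$, hence in $L^2(E)$, the right-hand side tends to $C_l\int_E|f|^2\,dx=0$; therefore $\sum_k a_k^2=0$, i.e.\ $a_k=0$ for all $k$.

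Thus, modulo the inverse Parseval inequality for Walsh sums, \trm{T5} reduces to a routine chain of implications: \trm{T4} supplies $\ell^2$-summability of the coefficients, this upgrades $T$-summability on $E$ to $L^2$-convergence and hence to a.e.\ convergence along a subsequence, and the inequality then propagates the vanishing of $f$ on the large set $E$ to the vanishing of all its Walsh coefficients. The only genuine obstacle is the Walsh inverse Parseval inequality itself, in particular the verification that the threshold $|E|>1-2^{-4l}$ suffices; that estimate exploits the multiplicative structure $w_m=r_{k_1}\cdots r_{k_s}$ and follows the same blueprint as \lem{L11}, and none of its difficulty is present in the deduction above.
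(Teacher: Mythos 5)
Your argument is correct, and it rests on the same key ingredient as the paper, namely the inverse Parseval inequality for finite Walsh sums with spectrum in $\ZD^*(l)$; this is exactly \lem{L13} of the paper (its proof uses precisely $|E^c|<2^{-4l}$, i.e.\ the threshold \e{d11}), so the ``only genuine obstacle'' you flag is already supplied. The deduction itself, however, is organized differently from the paper's. The paper leaves \trm{T5} to the reader with the understanding that it is proved exactly like \trm{T7}: by Egorov's theorem one may assume the finite sums $S_n=\sum_k t_{n,k}a_kw_{m_k}$ converge to zero uniformly on a subset of $E$ whose measure still exceeds $1-2^{-4l}$ (possible since the inequality in \e{d11} is strict), then \lem{L13} applied directly to $S_n$ gives $\varepsilon>\int_E|S_n|^2\ge c\sum_k|t_{n,k}|^2a_k^2$ for all large $n$, and letting $n\to\infty$ with property 3) forces each $a_k=0$; no a priori square-summability of the coefficients is needed. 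You instead first invoke \trm{T4} (legitimately, since pointwise convergence on $E$ yields the boundedness hypothesis \e{d10}), obtain $\sum_ka_k^2<\infty$, form the $L^2$ limit $f=\sum_ka_kw_{m_k}$, prove $\|S_n-f\|_2\to0$ via Parseval (the $m_k$ are distinct, so orthonormality is available) and dominated convergence using properties 1)--3), pass to an a.e.\ convergent subsequence to get $f=0$ a.e.\ on $E$, and finally apply \lem{L13} to the truncations $f_N$ and let $N\to\infty$. Every step of this chain is sound. What the paper's route buys is brevity and independence from \trm{T4}; what yours buys is the avoidance of Egorov's theorem and of the measure adjustment it entails, at the cost of routing through \trm{T4} and an $L^2$-limit argument.
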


Note that both Theorems \ref{T4} and \ref{T5} use the sequence $\ZD^*(l)$. Taking $\ZD(l)$ in \trm{T5}, the bound \e{d11} may be improved. 
\begin{theorem}\label{T8}
 Let $\{m_k\}$ be a sequence of integers $\ZD(l)$ and a summation $T=\{t_{m,n}\}$ satisfy 1)-3). If the Walsh sums 
\e{d72} converge to zero on a set $E\subset (0,1)$ of measure 
\begin{equation}\label{d73}
	|E|>1-2^{-l},
\end{equation}	
 then $a_k=0$ for all $k=1,2,\ldots$.
\end{theorem}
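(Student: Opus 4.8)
The plan is to follow the scheme of \trm{T5}, the gain in the measure threshold coming entirely from the homogeneity of $\ZD(l)$: every $w_m$ with $m\in\ZD(l)$ is a product of \emph{exactly} $l$ distinct Rademacher functions, so each partial sum $S_n(x):=\sum_k t_{n,k}a_kw_{m_k}(x)$ is a finite Walsh polynomial lying in the closed linear span $\ZH_l$ of $\{w_m:m\in\ZD(l)\}$ in $L^2(0,1)$ --- the $l$-th Rademacher chaos. Two facts about $\ZH_l$, both with the \emph{sharp} constant $1-2^{-l}$, are needed: (A) an inverse Parseval inequality --- there is $C=C(l,|E|)<\infty$ with $\|P\|_2^2=\sum_m b_m^2\le C\int_E|P|^2$ for every finite $P=\sum_{m\in\ZD(l)}b_mw_m$ and every $E$ with $|E|>1-2^{-l}$ --- and (B) its qualitative shadow: a nonzero $P\in\ZH_l$ cannot vanish on a set of measure $>1-2^{-l}$.

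Fact (B) I would prove by induction on $l$ (the case $l=0$ being trivial) via the Rademacher flip symmetry; it is the polynomial form of \otrm{O3}, and its sharpness is witnessed below. Given $0\ne P\in\ZH_l$, $l\ge1$, let $i$ be the least index with $r_i$ active in $P$ and split $P=r_iQ+R$, where $Q$ collects the coefficients of the $w_m$ divisible by $r_i$ --- a nonzero level-$(l-1)$ polynomial in $\{r_j:j\ne i\}$ --- and $R$ is the $r_i$-free part, independent of $r_i$. With $\tau_i$ the measure-preserving involution flipping $r_i$ one has $P\circ\tau_i=-r_iQ+R$, so at every point $\tfrac12\bigl(P^2+(P\circ\tau_i)^2\bigr)=Q^2+R^2$; hence on $\{P=0\}\cap\{P\circ\tau_i=0\}$ both $Q$ and $R$ vanish, and since $|\{P=0\}\cap\{P\circ\tau_i=0\}|\ge 2|\{P=0\}|-1$, if $P$ vanished on a set of measure $>1-2^{-l}$ then $Q$ would vanish on a set of measure $>1-2^{-(l-1)}$ in the $\{r_j:j\ne i\}$-system, contradicting the inductive hypothesis. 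The bound is sharp, since $(r_1-r_2)(r_3-r_4)\cdots(r_{2l-1}-r_{2l})\in\ZH_l$ vanishes on a set of measure exactly $1-2^{-l}$.

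Granting (A), \trm{T8} follows quickly. By Egorov's theorem there is $E'\subseteq E$ with $|E'|>1-2^{-l}$ on which $S_n\to0$ uniformly, so $\sup_n\int_{E'}|S_n|^2<\infty$; applying (A) with the set $E'$ to $P=S_n$ gives $\sup_n\sum_k t_{n,k}^2a_k^2<\infty$, and letting $n\to\infty$ (where $t_{n,k}\to1$) yields $\sum_k a_k^2<\infty$ by Fatou. Then $f:=\sum_k a_kw_{m_k}\in L^2(0,1)$, and by orthonormality and dominated convergence (dominant $(M+1)^2a_k^2$),
\[
\|S_n-f\|_2^2=\sum_k(t_{n,k}-1)^2a_k^2\longrightarrow0,
\]
so a subsequence of $S_n$ tends to $f$ a.e.; comparing with $S_n\to0$ on $E$ gives $f=0$ a.e. on $E$, $|E|>1-2^{-l}$. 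Approximating $f$ in $L^2$ by finite $P_j\in\ZH_l$ and applying (A) once more, $\|P_j\|_2^2\le C\int_E|P_j|^2\le C\|P_j^2-f^2\|_1\to0$, whence $f\equiv0$ and $a_k=0$ for all $k$.

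The main obstacle is establishing (A) with the sharp threshold. The flip/degree argument gives (B) cleanly, but its uniform quantitative upgrade --- equivalently, ruling out an $L^2$-normalized sequence of level-$l$ Walsh polynomials whose mass escapes onto sets of measure $<2^{-l}$ --- is delicate: the naive induction on $l$ controls the $Q$-part, but after one flip the measure budget for the level-$l$ remainder $R$ has doubled past $2^{-(l-1)}$, so the inductive hypothesis no longer applies to $R$ and a more careful bookkeeping is required (working directly in the dyadic-block model in $N$ variables, or a weak-limit argument absorbing the escaping mass). This is precisely the point at which the homogeneous family $\ZD(l)$ outperforms $\ZD^*(l)$, turning the threshold $1-2^{-4l}$ of Theorems~\ref{T4}--\ref{T5} into the sharp $1-2^{-l}$ of \trm{T8}.
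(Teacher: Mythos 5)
There is a genuine gap, and you have named it yourself: your entire argument funnels through the quantitative inverse Parseval inequality (A) with the sharp threshold $|E|>1-2^{-l}$, and you do not prove it --- you only prove the qualitative statement (B) and then concede that the ``uniform quantitative upgrade'' is the main obstacle, gesturing at a dyadic-block model or a weak-limit argument without carrying either out. Without (A) nothing in your chain works: the passage from pointwise convergence of $S_n$ to zero on $E$ (via Egorov) to $\sup_n\sum_k t_{n,k}^2a_k^2<\infty$, hence to square-summability, hence to an $L^2$ limit $f$ vanishing on $E$, and the final step killing $f$, all invoke (A). The tools you do have at hand give (A) only with a much worse threshold: hypercontractivity (the Bonami--Kiener bound \e{d2}) yields $\int_{E^c}|P|^2\le\|P\|_4^2|E^c|^{1/2}\le 3^l|E^c|^{1/2}\|P\|_2^2$, which forces a condition of the type $|E^c|<9^{-l}$ --- i.e.\ exactly the non-sharp regime of Theorems \ref{T4}--\ref{T5} (cf.\ \lem{L13}), not the threshold \e{d73} that \trm{T8} claims. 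Your own flip-induction breaks down for the quantitative version (as you note, the remainder $R$ exceeds the measure budget), so the key lemma is simply missing, and it is not clear it can be obtained by soft compactness without again confronting escaping mass.

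The paper proves \trm{T8} by a completely different, elementary and purely pointwise mechanism that avoids any inverse Parseval inequality, any square-summability, and even Egorov. Fix $m=2^{k_1}+\ldots+2^{k_l}\in\ZD(l)$. \lem{L12} shows that any set $E$ with $|E|>1-2^{-l}$ contains a full ``cube'' of $2^l$ points $\alpha\oplus\varepsilon_1 2^{-k_1}\oplus\ldots\oplus\varepsilon_l 2^{-k_l}$, $\varepsilon_j\in\{0,1\}$ (an intersection argument with the $2^l$ translates of $E$, each step losing at most $2^{-l}$ of measure). \lem{L10} is an exact finite-difference identity: the alternating sum $\sum_{\varepsilon_j=0,1}(-1)^{\varepsilon_1+\ldots+\varepsilon_l}w_n(\alpha\oplus\varepsilon_1 2^{-k_1}\oplus\ldots\oplus\varepsilon_l 2^{-k_l})$ vanishes for every $n\in\ZD(l)$ with $n\neq m$ and equals $\pm2^l$ for $n=m$; here the homogeneity of $\ZD(l)$ is used, since for $n\neq m$ some $2^{k_s}$ is absent from $n$ and flipping $\varepsilon_s$ pairs off the terms. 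Applying this alternating functional to the (finite, by condition 1)) partial sums $S_n$ at the $2^l$ points of the cube --- all of which lie in $E$, where $S_n\to0$ --- isolates $t_{n,m}a_m(\pm2^l)\to0$, and $t_{n,m}\to1$ gives $a_m=0$ directly. If you want to salvage your route, you would have to actually prove (A) at the sharp threshold, which is a substantially harder (and possibly open, as stated) quantitative problem than \trm{T8} itself; the coefficient-extraction argument above is the way the sharp constant $1-2^{-l}$ is reached.
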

\subsection{Final remarks}
\begin{remark}
 \trm{T4} is gives a generalization of \trm{O2}. First,  \trm{T4} is stated in the setting of general summation method and second, the convergence in measure is replaced to the boundedness of sums \e{d72} on a set of measure \e{d11}. The method of the proof of \trm{T4}  is quite different from the one used in \trm{O2}, where authors apply the decoupling technique that is applicable only for the series with the canonical order of the Walsh functions. In fact, our proof of \trm{T4} (as well us \trm{T5}) is elementary and immediately follows from an inverse Parseval type inequality proved in \sect{S6} (\lem{L13}). 
\end{remark}
\begin{remark}
	In the proof of \trm{T8} we use the argument of Astashkin-Sukhanov \cite{Ast2}. The main ingredient here are Lemmas \ref{L10} and \ref{L12} proved in \sect {S10}. 
\end{remark}
\begin{remark}
In fact, \trm{O3} of \cite{Ast2} was proved in more general setting, instead of the Walsh functions \e{d74} considering the functions
\begin{equation*}
	g_m(x)=\phi\left(2^{k_1}x\right)	\ldots\phi\left(2^{k_l}x\right) (\text{ see } \e{d70}), 
\end{equation*}
where $\phi$ is a $1$-periodic finite-valued function such that
\begin{equation*}
	\phi\left(x+\frac{1}{2}\right)-\phi(x)\neq 0 \text { for a.e. }x\in (0,1). 
\end{equation*} 
The same generalization permits also \trm{T8}, it just needed to replace $\pm 2^l$ in \lem{L10} by a non-zero number. One can apply this general results, taking $\phi(x)$ to be either the Rademacher first function $r_1(x)$ or $\sin (2\pi x)$.
\end{remark}
\begin{remark}
	The Littlewood-Paley inequality is a basic tool in the proof of \trm{T1}.  Moreover, it is shown that the constants in \e{d29} and \e{d41} obey the estimate
	\begin{equation*}
		C_{l,\lambda,p}\le c(l,\lambda) \cdot C_p.
	\end{equation*}
 The optimal rate of growth of the constant in \e{d41} is an open problem yet. It is only known the bound $\sqrt{p}\lesssim C_p\lesssim p\log p$ due to Pichorides \cite{Pich}, where it is also conjectured that $C_p\lesssim \sqrt{p}$.
 Hence we conclude, in fact, \e{d29} holds with a constant $c(l,\lambda)p\log p$.
\end{remark}

We would like to thank Boris Kashin and Sergei Astashkin for valuable discussions on the subject during the 21th Saratov (Russia) Winter School on Theory of Functions and Approximations.	

\section{Proof of Theorems \ref{T1} and \ref{T2}}
The proof of \trm{T1} uses the approach given in \cite{Mul} in the case of Rademacher chaos.  The main ingredient here is the Littlewood-Paley inequality
\begin{equation}\label{d41}
	\left\|\sum_{k\ge 1}c_ke^{2\pi ikx}\right\|_p\le C_p\left\|\left( \sum_{j\ge 0}\left|\sum_{k\in [2^j,2^{j+1})}c_ke^{2\pi ikx}\right|^2\right)^{1/2}\right\|_p.
\end{equation}
The following standard properties of lacunary sequences are either well-known (see for example \cite{Zyg}, chap. 5.7, or \cite{Bari}, chap. 11) or easy to verify. For an integer $m$ from \e{d31} we denote $\head(m)=\varepsilon_{k_1}n_{k_1}$ and let 

(P1) If $l\ge 2$ and $\zN=\{n_k\}$ is a $\lambda$-lacunary sequence with $\lambda>\lambda_l$, then 
	\begin{equation}\label{d40}
		\ZG_j=\{m\in \zN^*(l):\, \head(m)=\sign(j)\cdot n_j\}\subset \pm (a_l\cdot n_j,b_l\cdot n_j),
	\end{equation}
where $b_l>a_l>0$ are constants $j\in \ZZ\setminus \{0\}$.

(P2) If $\{n_k\}$ is $3$-lacunary, then every integer $m$ may have at most one representation
\begin{equation}\label{d80}
	\varepsilon_1n_{j_1}+\ldots+\varepsilon_sn_{j_s},
\end{equation}
where  $s=1,2,\ldots,$, $\varepsilon_j=\pm 1$, $j_1>\ldots>j_s$. Besides, $0$ can not be written in the form \e{d80}.

(P3) Let $l\ge 2$ and  $\{n_k\}$ be a $\lambda$-lacunary sequence with $\lambda>\lambda_{l+1}$. Then every integer $m$ has at most one representation
\begin{equation}\label{d4}
	m=n_{j_1}+\ldots+n_{j_s}, 
\end{equation} 
where  $j_1>\ldots>j_s$, $1\le s\le l$. 

(P4) Let $l\ge 2$ and  $\{n_k\}$ be a $\lambda$-lacunary sequence with $\lambda>\lambda_{l+1}$. Then every integer $m$ has at most $d(l,\lambda)$ number of representation
	\begin{equation}\label{d43}
		m=n_{j_1}+\ldots+n_{j_s}-n_{k_1}-\ldots-n_{k_t}, 
	\end{equation} 
where  
\begin{align}
&0\le s,t\le l,\\
&\{j_1,\ldots,j_s\}\cap\{ k_1,\ldots,k_t\}=\varnothing,\\
&j_1>\ldots>j_s,\quad k_1>\ldots>k_t,
\end{align}
and $d(l,\lambda)$ is an integer depending only on $l$ and $\lambda$.

\begin{proof}[Proof of \trm{T1}]
	We will use induction. Suppose we have already proved \e{d29} for $l=s-1$. Let us proceed the case of $l=s$. Using (P1) and the Littlewood-Paley inequality \e{d41}, for any polynomial \e{d26} we can write
	\begin{equation}\label{d24}
		\|S\|_p\le C\left\|\left(\sum_j\left|\sum_{m\in \ZG_j}c_me^{2\pi imx}\right|^2\right)^{1/2}\right\|_p,
	\end{equation}
where $C=C_{l,\lambda,p}$. If $m\in \ZG_j$, $m=\varepsilon_j n_j+t_m$, where
\begin{equation*}
 \varepsilon_j =\pm 1,\quad t_m=\varepsilon_1n_{j_1}+\ldots+\varepsilon_{l-1}n_{j_{s-1}},\, j>j_1>\ldots>j_{s-1}.
\end{equation*}
Substituting it in \e{d24} then applying Minkowski's inequality in $L^{p/2}$, we obtain
\begin{align}
	\|S\|_p&\le C\left\|\left(\sum_j\left|\sum_{m\in \ZG_j}c_me^{2\pi it_mx}\right|^2\right)^{1/2}\right\|_p\\
	&\le C\left(\sum_j\left\|\left|\sum_{m\in \ZG_j}c_me^{2\pi it_mx}\right|^2\right\|_{p/2}\right)^{1/2}=C\left(\sum_j\left\|\sum_{m\in \ZG_j}c_me^{2\pi it_mx}\right\|_{p}^2\right)^{1/2}.
\end{align}
Since $\{t_m:\, m\in \ZG_j\}\subset \zN(s-1)$ and $\lambda>\lambda_s>\lambda_{s-1}$, applying the induction assumption, we obtain
\begin{equation*}
	\left\|\sum_{m\in \ZG_j}c_me^{2\pi it_mx}\right\|_{p}\le C'_{l,\lambda,p}\left(\sum_{m\in \ZG_j}|c_m|^2\right)^{1/2}
\end{equation*}
and then $\|S\|_p\le  C'_{l,\lambda,p}\|S\|_2$. This completes the proof of \trm{T1}.
\end{proof}

\begin{lemma}\label{D1}
	If $\{n_k\}$ is a $3$-lacunary sequence, then
	\begin{align}
		\int_0^1e^{2\pi im(x+u)}&\prod_{k=1}^s\cos (2\pi n_{k_j}u)du\\
		&\qquad=\left\{\begin{array}{ll}
			2^{-s}e^{2\pi imx},&\hbox{ if }m= \varepsilon_1n_{k_1}+\varepsilon_{2}n_{k_2}+\ldots+\varepsilon_1n_{k_s},\\
			0,&\hbox{ otherwise },
		\end{array}
		\right.
	\end{align}
where $\varepsilon_j=\pm 1$, $k_1>k_2>\ldots>k_s$.
\end{lemma}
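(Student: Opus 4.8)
The plan is to expand the product of cosines into a sum of exponentials and integrate term by term, using orthogonality of the exponential system on $(0,1)$ together with property (P2), which guarantees that a $3$-lacunary sequence admits \emph{unique} signed representations. First I would write
\begin{equation*}
	\prod_{j=1}^s\cos(2\pi n_{k_j}u)=2^{-s}\sum_{\varepsilon\in\{-1,1\}^s}e^{2\pi i(\varepsilon_1 n_{k_1}+\ldots+\varepsilon_s n_{k_s})u},
\end{equation*}
so that, after the substitution and pulling out $e^{2\pi i m x}$, the left-hand side becomes
\begin{equation*}
	2^{-s}e^{2\pi imx}\sum_{\varepsilon\in\{-1,1\}^s}\int_0^1 e^{2\pi i(m+\varepsilon_1 n_{k_1}+\ldots+\varepsilon_s n_{k_s})u}\,du.
\end{equation*}
Each integral equals $1$ when the exponent vanishes, i.e.\ when $m=-\varepsilon_1 n_{k_1}-\ldots-\varepsilon_s n_{k_s}$, and $0$ otherwise.

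Next I would invoke (P2): since $k_1>k_2>\ldots>k_s$ and $\{n_k\}$ is $3$-lacunary, the integer $m$ can coincide with $-\varepsilon_1 n_{k_1}-\ldots-\varepsilon_s n_{k_s}$ for \emph{at most one} choice of signs $\varepsilon=(\varepsilon_1,\ldots,\varepsilon_s)$, because two distinct sign patterns on the \emph{same} indices would give two distinct representations of $m$ in the form \e{d80} (and (P2) also rules out the degenerate possibility $m=0$ arising spuriously). Hence the sum over $\varepsilon$ contributes either a single term equal to $1$, precisely when $m$ is of the form $\varepsilon_1 n_{k_1}+\ldots+\varepsilon_s n_{k_s}$ (replacing each $\varepsilon_j$ by $-\varepsilon_j$, which ranges over the same set), or nothing at all. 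This yields the stated value $2^{-s}e^{2\pi imx}$ in the first case and $0$ in the second.

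I do not anticipate a genuine obstacle here; the only point requiring care is the bookkeeping of signs, namely matching the condition ``$m+\varepsilon_1 n_{k_1}+\ldots+\varepsilon_s n_{k_s}=0$'' with the form ``$m=\varepsilon_1 n_{k_1}+\ldots+\varepsilon_s n_{k_s}$'' stated in the lemma, which is immediate upon flipping all signs, and checking that uniqueness of the representation is exactly what (P2) provides on a fixed strictly decreasing index set. One should also note that the indices $k_1,\ldots,k_s$ are fixed throughout, so there is no interference between different index sets — the uniqueness in (P2) is across all index sets and all lengths, which is more than enough.
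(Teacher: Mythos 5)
Your proof is correct and takes essentially the same route as the paper's: expand the product of cosines, integrate term by term using orthogonality on $(0,1)$, and invoke (P2) to get uniqueness of the signed representation of $m$ over the fixed index set $k_1>\ldots>k_s$. The only cosmetic difference is that you expand directly into exponentials (so exactly one term contributes $1$), while the paper expands into cosines and finishes with the computation $\int_0^1\cos^2(2\pi mu)\,du=\tfrac12$ applied to the two resonant cosine terms; both yield the factor $2^{-s}$.
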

\begin{proof}
	Using the product of two cosines formula repeatedly, we get the formula 
	\begin{equation}\label{d46}
		\prod_{k=1}^s\cos (2\pi n_{k_j}x)=2^{-s}\sum_{\varepsilon_i=\pm 1}\cos2\pi(\varepsilon_1n_{k_1}+\varepsilon_{2}n_{k_2}+\ldots+\varepsilon_sn_{k_s})x,
	\end{equation}
	where the summation is taken over all combinations of $\varepsilon_j=\pm 1$. Thus if $m\neq  \varepsilon_1n_{k_1}+\varepsilon_{2}n_{k_2}+\ldots+\varepsilon_1n_{k_s}$ for any choice of $\varepsilon_j=\pm 1$, then by orthogonality we obtain
	\begin{equation*}
		\int_0^1e^{2\pi im(x+u)}\prod_{k=1}^s\cos( 2\pi n_{k_j}u)du=0.
	\end{equation*}
Now let $m= \varepsilon'_1n_{k_1}+\varepsilon'_{2}n_{k_2}+\ldots+\varepsilon'_1n_{k_s}$, for some  $\varepsilon'_j=\pm 1$. By (P2), such a representation of $m$ is unique. Thus, using \e{d46}, we conclude
	\begin{align}
		\int_0^1e^{2\pi im(x+u)} &\prod_{k=1}^s\cos( 2\pi n_{k_j}u)du\\
		&	=2^{-s}\int_0^1e^{2\pi im(x+u)}(\cos2\pi mu+\cos(-2\pi mu))du\\
		&=2^{1-s}\int_0^1e^{2\pi im(x+u)}\cos(2\pi mu)du\\
		&=2^{1-s}e^{2\pi imx}\int_0^1\cos^2(2\pi mu)du\\
		&\qquad \qquad +i\cdot 2^{1-s}e^{2\pi imx}\int_0^1\sin(2\pi mu)\cos(2\pi mu)du\\
		&=2^{-s}e^{2\pi imx},
	\end{align}
completing the proof.
\end{proof}
\begin{proof}[Proof of \trm{T2}]
By (P2) each $m\in \zN(l)$ has a unique representation \e{d31} and so $m$ can be uniquely determined by a set $A=\{{j_1},\ldots,{j_l}\}\subset \ZN$, and a sequence  $\varepsilon=\{\varepsilon_j=\pm 1;\, j=1,2,\ldots, l\}$. Thus we can consider $m=m(A,\varepsilon)$ as a function on $A$ and $\varepsilon$. Thus any finite sum may be written in the form
	\begin{equation*}
		S(x)=\sum_{m\in \zN(l)}c_me^{2\pi imx}=\sum_{A}\sum_{\varepsilon}c_me^{2\pi imx}.
	\end{equation*}
For $t\in (0,1)$ we consider also the sum
	\begin{equation*}
		S_t(x)=\sum_{A}\left(\sum_{\varepsilon}c_me^{2\pi imx}\right)w_A(t),
	\end{equation*}
where 
\begin{equation*}
	w_A(t)=\prod_{j\in A}r_j(t)
\end{equation*}
denotes the Walsh function corresponding the set $A=\{{j_1},\ldots,{j_l}\}$. Since by H\"older's inequality
	\begin{equation*}
		\sum_{A}\left|\sum_{\varepsilon}c_me^{2\pi imx}\right|^2\le 2^{l}\sum_m|c_m|^2,
	\end{equation*}
from the Rademacher chaos bound \e{d2} we obtain
	\begin{equation*}
		\int_0^1\int_0^1|S_t(x)|^pdtdx\le (p-1)^{pl/2}\cdot \left(2^{l}\sum_m|c_m|^2\right)^{p/2}.
	\end{equation*}
	Thus there exists a $t_0\in (0,1)$ such that
	\begin{equation}\label{d47}
		\int_0^1|S_{t_0}(x)|^pdx\le (p-1)^{pl/2}\cdot \left(2^{l}\sum_m|c_m|^2\right)^{p/2}.
	\end{equation}
	Consider the Riesz product
	\begin{align}
		g(x)&=\prod_{j=1}^n(1+r_j(t_0)\cos 2\pi n_jx)\\
		&=1+\sum_{A\subset \{1,2,\ldots,n\}}\left(\prod_{j\in A}r_j(t_0)\cos 2\pi n_j x\right)\\
		&=1+\sum_{A\subset \{1,2,\ldots,n\}}\left(w_A(t_0)\prod_{j\in A}\cos 2\pi n_j x\right),\label{d51}
	\end{align}
where $n$ is a bigger enough integer. It is clear
\begin{equation}\label{d48}
	g(x)\ge 0,\quad \int_0^1g(x)dx=1,
\end{equation}
where the latter follows from formula \e{d46} and the second part of (P2). Thus, applying \lem{D1}, one can write
\begin{equation*}
	S(x)=2^l\int_0^1S_{t_0}(x+u)g(u)du.
\end{equation*}
Then, applying Jensen's inequality, \e{d47} and \e{d48}, we easily get
	\begin{align}
	\int_0^1\left|\frac{S(x)}{2^l}\right|^pdx&\le \int_0^1\int_0^1|S_{t_0}(x+u)|^pg(u)dudx\\
	&\le  ({p-1})^{pl/2}\cdot \left(2^{l}\sum_m|c_m|^2\right)^{p/2}.
	\end{align}
This implies \e{d50}.
\end{proof}
\section{Proof of \cor{C1}}
As it was stated in the introduction that the first part of \cor{C1} immediately follows from \trm{T1}. So we will only give here a counterexample showing sharpness of $\lambda_l$ both in \trm{T1} and \cor{C1}.  According to classical results stated in the introduction there is a trigonometric series 
\begin{equation*}
\sum_{m=1}^\infty c_me^{imx},\quad \sum_{m=1}^\infty |c_m|^2<\infty,
\end{equation*} 
 which is divergent after some rearrangement of the terms. Thus it is enough to prove the following lemma.
 \begin{lemma}
 	There exists a $\lambda_l$-lacunary sequence $\zN$ such that $\zN(l)$ contains all the integers $m\ge 3^l$.
 \end{lemma}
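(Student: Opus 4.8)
The plan is to construct a specific $\lambda_l$-lacunary sequence recursively and show that the $l$-wise sum set $\zN(l)$ eventually covers every integer. Recall $\lambda_l$ satisfies $\lambda_l^{l-1}=\lambda_l^{l-2}+\ldots+1$, equivalently $\lambda_l^l - 2\lambda_l^{l-1} + 1 = 0$; the key arithmetic feature is that $1 + \lambda_l + \lambda_l^2 + \ldots + \lambda_l^{l-2} = \lambda_l^{l-1}$, so a ``carry'' at one scale can be absorbed by the sum of all lower scales. I would take $n_k$ to grow essentially like $\lambda_l^k$, rounded to integers; more precisely, I would fix the first $l-1$ terms and then define $n_{k+1}$ in terms of $n_k,\ldots,n_{k-l+2}$ so that the ratio stays just above $\lambda_l$ while the partial-sum identity $n_{k+1}\le n_k+n_{k-1}+\ldots+n_{k-l+2}+C$ (from $\lambda_l^{l-1}=\sum_{i=0}^{l-2}\lambda_l^i$) holds with a uniformly bounded error.

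First I would set up the recursion: choose $n_1<n_2<\ldots<n_{l-1}$ a short block of consecutive-enough integers, then for $k\ge l-1$ define $n_{k+1} = n_k + n_{k-1} + \ldots + n_{k-l+2}$ (the sum of the previous $l-1$ terms), or a slight variant of this to guarantee strict $\lambda_l$-lacunarity. One checks that with this linear recurrence the ratios $n_{k+1}/n_k$ converge to the dominant root $\lambda_l$ of the characteristic polynomial $x^{l-1} = x^{l-2}+\ldots+1$, and by choosing the initial block appropriately one arranges $n_{k+1}/n_k > \lambda_l$ for all $k$ (or $\ge\lambda_l$ strictly — whichever the statement needs; here it is the boundary case $\lambda_l$-lacunary). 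Second, and this is the heart of the argument, I would prove by induction on $m$ that every integer $m\ge 3^l$ lies in $\zN(l)$, i.e. admits a signed $l$-term representation $\pm n_{k_1}\pm\ldots\pm n_{k_l}$ with $k_1>\ldots>k_l$. The inductive step: given $m$, pick $k_1$ to be the largest index with $n_{k_1}\le m$ (so $m < n_{k_1+1} = n_{k_1}+n_{k_1-1}+\ldots+n_{k_1-l+2}$); then $m - n_{k_1}$ is a nonnegative integer strictly less than $n_{k_1-1}+\ldots+n_{k_1-l+2}$, and one recursively represents the remainder as a sum/difference of $l-1$ terms drawn from indices $< k_1$. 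The bound $3^l$ (or whatever threshold) is there to guarantee that when the recursion bottoms out one still has enough small terms $n_1,\ldots,n_{l-1}$ available and the remainders never force repeated indices.

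The main obstacle I anticipate is controlling the recursion cleanly enough that the ``greedy'' subtraction $m\mapsto m-n_{k_1}$ always leaves a remainder that (a) is representable with exactly $l-1$ terms of \emph{strictly smaller} index, and (b) never needs to reuse an index already spent. The signs $\varepsilon_j=\pm1$ give crucial slack — when the naive greedy remainder overshoots or undershoots, one flips a sign at a lower level, which is exactly why the boundary value $\lambda_l$ (rather than something larger) is the critical ratio: it is the smallest growth rate for which $n_{k_1-1}+\ldots+n_{k_1-l+2}$ is large enough to ``reach back'' over the gap $[n_{k_1}, n_{k_1+1})$. I would handle this by strengthening the induction hypothesis: instead of merely ``$m\in\zN(l)$ for $m\ge 3^l$,'' prove something like ``for every $j$ and every integer $m$ with $0\le m$ in a suitable interval determined by $n_j$, $m$ has an $(\le l)$-term signed representation using only indices $\le j$,'' so that the recursion is self-feeding. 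A secondary technical point is rounding: since the $n_k$ must be integers but $\lambda_l$ is irrational, the partial-sum identity only holds up to a bounded additive error, and one must verify this error is absorbed by enlarging the threshold from something like $\lambda_l^l$ to the cruder explicit bound $3^l$, using $\lambda_l<2<3$.
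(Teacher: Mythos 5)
There is a genuine gap, and it is structural rather than technical: a single sequence growing like $\lambda_l^k$ cannot do what the lemma asks, so your construction fails no matter how carefully the recursion is tuned. If $n_{k+1}=n_k+n_{k-1}+\dots+n_{k-l+2}$, then $n_k=A\lambda_l^k+O(\rho^k)$ with $\rho<1$ (all other roots of $x^{l-1}-x^{l-2}-\dots-1$ lie inside the unit disc). Because $\lambda_l^{-1}+\dots+\lambda_l^{-(l-1)}=1$, the ``tight'' combination $n_{k_1}-n_{k_1-1}-\dots-n_{k_1-l+1}$ is exactly $0$ (and for any bounded perturbation of the recurrence it is a single $O(1)$ integer), while \emph{every other} signed $l$-wise sum with largest index $k_1$ has absolute value at least $c\,\lambda_l^{k_1-l}$: replacing any lower index by a smaller one, or flipping any minus to a plus, changes the sum by at least $c\,\lambda_l^{k_1-l}$. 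Consequently every nonzero $m$ with $|m|\le N$ that lies in $\zN(l)$ must use a top index $k_1=O(\log N)$, so $\zN(l)\cap[3^l,N]$ has at most $2^l\binom{O(\log N)}{l}=O((\log N)^l)$ elements --- nowhere near the $\sim N$ integers you need to cover. The greedy/inductive covering step therefore cannot succeed; the signs do not give enough ``slack'' to beat this count. A secondary but also fatal point: the pure recurrence does not even produce a $\lambda_l$-lacunary sequence, since the error term $B\rho^k$ makes $n_{k+1}/n_k$ oscillate around $\lambda_l$ (for $l=3$ this is the familiar fact that Fibonacci ratios alternate above and below the golden ratio), so infinitely many ratios fall below $\lambda_l$; and any positive correction restoring lacunarity destroys the identity $n_{k+1}=n_k+\dots+n_{k-l+2}$ on which your greedy step relies.

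The paper avoids all of this by abandoning the idea of one geometric-scale sequence. For \emph{each} target integer $m\ge 3^l$ it introduces a dedicated block $n_k(m)=[10^{ml}\lambda_l^k]+3^k$, $k=1,\dots,l-1$, together with $n_l(m)=m+n_{l-1}(m)+\dots+n_1(m)$, so that $m=n_l(m)-n_{l-1}(m)-\dots-n_1(m)$ holds by construction --- no covering argument is needed. The equation $\lambda_l^{l-1}=\lambda_l^{l-2}+\dots+1$ enters only to show that the last ratio $n_l(m)/n_{l-1}(m)$ still exceeds $\lambda_l$ (the sum of the first $l-1$ terms is $\approx 10^{ml}\lambda_l^{l}$, and adding $m>0$ pushes the ratio strictly above $\lambda_l$), the $3^k$ terms absorb the rounding errors, and the factor $10^{ml}$ separates the blocks so violently that lacunarity between blocks is automatic. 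Your intuition that $\lambda_l$ is exactly the threshold at which $n_{l-1}+\dots+n_1$ can ``reach back'' to $n_l$ is the right one, but it must be implemented once per integer $m$ with a freshly chosen block, not along a single self-feeding recursion.
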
  
\begin{proof}
	For every $m\ge 3^l$ consider the group of integers $\{n_k(m):\, k=1,2,\ldots, l\}$ defined by
 \begin{align}
  &n_k(m)=[10^{ml}\lambda_l^{k}]+3^{k},\quad k=1,2,\ldots, l-1,\\ &n_l(m)=m+n_{l-1}(m)+\ldots+n_1(m).
 \end{align}
Recalling that $\lambda_l$ satisfies equation \e{d28}, a rough calculation show that all the groups together, i.e.
\begin{equation*}
	\zN=\{n_k(m):\, 1\le k\le l,\, m\ge 3^l\}
\end{equation*}
form a $\lambda_l$-lacunary sequence.  Besides, every $m\ge 3^l$ is written in the form 
\begin{equation}
m=n_l(m)-n_{l-1}(m)-\ldots-n_{1}(m)
\end{equation}
that means that $\zN(l)$ contains all the integers $m\ge 3^l$. So the proof is complete.
\end{proof}

\section{Proofs of Theorems \ref{T6} and \ref{T7}}\label{S5}

\begin{lemma}\label{L11}
	Let $l\ge 2$ and $\zN$ be a $\lambda$-lacunary sequence with $\lambda>\lambda_{l+1}$, then for any finite sequence $b=\{b_k:\, k\in  \zN_+^*(l)\}$ and a set $E$ of measure \e{d57} we have
	\begin{equation}\label{d55}
		\int_E\left|\sum_{m\in \zN_+^*(l)} b_me^{2\pi i mx}\right|^2>c\sum_{m\in  \zN_+^*(l)} |b_m|^2
	\end{equation}
\end{lemma}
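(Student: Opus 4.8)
The plan is to derive \eqref{d55} from the forward $L^p$ estimate of \trm{T1} together with an elementary ``restricted Parseval'' argument: if the $L^4$ norm of a function is controlled by its $L^2$ norm, then no small set can carry a large share of its $L^2$ mass, so on a set $E$ whose complement is small the portion of the $L^2$ mass lying over $E$ is still a fixed proportion of the total. Write $S(x)=\sum_{m\in\zN_+^*(l)}b_me^{2\pi imx}$; by orthogonality of the exponentials over the distinct integers of $\zN_+^*(l)$ we have $\|S\|_2^2=\sum_{m\in\zN_+^*(l)}|b_m|^2$, so \eqref{d55} is exactly the assertion that $\int_E|S|^2$ dominates a fixed multiple of $\|S\|_2^2$.

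First I would record that the bound of \trm{T1} extends from $\zN(l)$ to $\zN_+^*(l)$. Split $S=\sum_{s=1}^{l}S_s$ according to the partition $\zN_+^*(l)=\bigsqcup_{s=1}^{l}\zN_+(s)$, the sets $\zN_+(s)$ being pairwise disjoint by (P3) since $\lambda>\lambda_{l+1}$. For $s\ge 2$, \trm{T1} applies because $\lambda>\lambda_{l+1}>\lambda_l\ge\lambda_s$, while for $s=1$ one invokes Zygmund's classical lacunary Khintchine inequality; in either case $\|S_s\|_p\le C_{s,\lambda,p}\|S_s\|_2$. Using the triangle inequality, the Cauchy--Schwarz inequality, and $\sum_s\|S_s\|_2^2=\|S\|_2^2$ (the spectra of the $S_s$ being disjoint),
\begin{equation*}
\|S\|_p\le\sum_{s=1}^{l}\|S_s\|_p\le\Big(\max_{1\le s\le l}C_{s,\lambda,p}\Big)\sum_{s=1}^{l}\|S_s\|_2\le\sqrt{l}\,\Big(\max_{1\le s\le l}C_{s,\lambda,p}\Big)\|S\|_2 .
\end{equation*}
Fixing $p=4$ yields a constant $C=C(l,\lambda)\ge 1$ with $\|S\|_4\le C\|S\|_2$.

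Next, denoting by $E^c$ the complement of $E$ and applying H\"older's inequality with exponents $2,2$,
\begin{equation*}
\int_E|S|^2=\|S\|_2^2-\int_{E^c}|S|^2\ge\|S\|_2^2-|E^c|^{1/2}\|S\|_4^2\ge\big(1-C^2|E^c|^{1/2}\big)\|S\|_2^2 .
\end{equation*}
Now set $\alpha(l,\lambda)=1-\frac{1}{4C^4}\in(0,1)$. The hypothesis $|E|>\alpha(l,\lambda)$ forces $|E^c|^{1/2}<1/(2C^2)$, hence $\int_E|S|^2\ge\frac12\|S\|_2^2=\frac12\sum_{m}|b_m|^2$. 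For a nontrivial $b$ this is strictly larger than $\frac14\sum_m|b_m|^2$, so \eqref{d55} holds with $c=\frac14$ (and it is vacuous when $b\equiv 0$).

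I do not expect a genuine obstacle. The two points needing a little care are the passage from $\zN(l)$ to the union $\zN_+^*(l)$ in the forward inequality, where one must make sure the pieces $S_s$ have disjoint spectra so that their squared $L^2$ norms add up to $\|S\|_2^2$ — this is precisely where the hypothesis $\lambda>\lambda_{l+1}$ (via (P3)) is used, beyond what \trm{T1} alone requires — and the bookkeeping ensuring that the threshold $\alpha(l,\lambda)$ depends only on $l$ and $\lambda$, which it does since it is built from the single constant $C=C(l,\lambda,4)$ produced by \trm{T1} and the lacunary Khintchine inequality.
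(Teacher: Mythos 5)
Your proposal is correct, but it follows a genuinely different route from the paper's. The paper proves \lem{L11} directly and self-containedly: it expands $\int_E|S|^2$ into the Parseval term $|E|\sum_m|b_m|^2$ minus a cross-term over $E^c$, organizes that cross-term through the combinatorics of the representations $m=n_{j_1}+\cdots+n_{j_s}$ (uniqueness from (P3)), and controls it by Cauchy--Schwarz together with Bessel's inequality, where the bounded-multiplicity property (P4) guarantees that the exponentials $e^{2\pi i(u-v)x}$ split into at most $d(l,\lambda)$ orthonormal families; this produces the explicit threshold $\alpha(l,\lambda)=1-(d(l,\lambda)\cdot 2^{l+1})^{-1}$. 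You instead deduce the restricted lower bound from the forward Khintchine-type estimate: \trm{T1} applied to the pieces with $2\le s\le l$ (legitimate, since $\lambda>\lambda_{l+1}>\lambda_s$ and $\zN_+(s)\subset\zN(s)$), Zygmund's classical lacunary inequality for $s=1$, disjointness of the spectra via (P3), and then the standard Paley--Zygmund-type step $\int_{E^c}|S|^2\le|E^c|^{1/2}\|S\|_4^2\le C^2|E^c|^{1/2}\|S\|_2^2$. There is no circularity, because \trm{T1} is proved independently through Littlewood--Paley, and since Theorems \ref{T6} and \ref{T7} only require \emph{some} constant $\alpha(l,\lambda)\in(0,1)$, your less explicit choice $1-1/(4C^4)$ is acceptable (the handling of the trivial case $b\equiv0$ matches the paper's implicit convention). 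What each approach buys: yours is shorter and dispenses with the multiplicity count (P4) entirely (indeed (P3) is only needed to make the splitting $S=\sum_s S_s$ well defined, and could even be replaced by an arbitrary assignment of each frequency to one representation); the paper's argument is elementary, independent of \trm{T1} and of the Littlewood--Paley constant, yields a more explicit and quantitative $\alpha(l,\lambda)$, and its template carries over verbatim to the Walsh setting of \lem{L13}, where no analogue of \trm{T1} is invoked.
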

\begin{proof}
	We have
	\begin{align}
		S=\int_E\left|\sum_{m\in \zN_+^*(l)} b_me^{2\pi imx}\right|^2		&=|E|\cdot \sum_{m\in \zN_+^*(l)} |b_m|^2\\
		&\qquad -\sum_{t,s\in \zN_+^*(l),\, t\neq s}b_{t}b_{s}\int_{E^c}e^{2\pi i(t-s)x}dx\\
		&=S_1-S_2.\label{d52}
	\end{align}
We write $E^c$ in the last integral, since the integrals of $e^{2\pi i(t-s)x}$ over $(0,1)$ are zero. 
Each $m\in \zN_+^*(l)$ has a unique representation
\begin{equation}\label{d59}
	m=n_{j_1}+\ldots+n_{j_s},\quad 1\le s\le l,
\end{equation}
according to property (P3). This defines a one to one mapping $\tau$ assigning a subsets $A=\{j_1,\ldots,j_s\}\subset \ZN$ to $m$ by \e{d59}. If $t=\tau(A)$ and $s=\tau(B)$, then we define
\begin{equation}\label{d83}
	t\wedge s=\tau(A\cap B),\quad t\vee s=\tau(A\cup B),
\end{equation}
where in the last notation we additionally suppose that $A\cap B=\varnothing$ and $\#(A\cup B)\le l$.
 For $m\in \zN_+^*(l)$, denote by $\#(m)$ the number of terms in representation \e{d59}. The second sum in \e{d52} can be written in the form
\begin{equation}
	S_2=\sum_{u,v,z}b_{u\vee z}b_{v\vee z}\int_{E^c}e^{2\pi i(u-v)x}dx,
\end{equation}
where the summation is taken over all combinations of numbers $u,v,z\subset \zN_+^*(l)$, satisfying
\begin{align}
	&\#(z)\le l-1,\label{d82}\\
	&u\wedge v=u\wedge z=v\wedge z=0,\label{d81}\\
	&0\le \#(u),\#(v)\le l-\#(z),\quad u+v>0.\label{12}
\end{align}
Using these notations and H\"older's inequality, we obtain
\begin{align}
	|S_2|&\le \sum_{z:\, \#(z)\le l-1}\left(\sum_{u,v:\, \text { satisfying } \e{d81},\e{12}}|b_{u\vee z}b_{v\vee z}|^2\right)^{1/2}\\
	&\qquad\qquad\qquad\qquad\times \left(\sum_{u,v:\, \text { satisfying } \e{d81},\e{12}}\left|\int_{E^c}e^{2\pi i(u-v)x}dx\right|^2\right)^{1/2}.\label{d77}
\end{align}
From (P4) it follows that each integer $m$ has at most $d(l,\lambda)$ number of representation $u-v$, where $u,v$ satisfy  \e{d81}, \e{12}. So the set of functions $e^{2\pi i(u-v)x}$ are a union of $d(l,\lambda)$ orthonormal systems. Thus, by Bessel inequality we get
\begin{equation}\label{d78}
\sum_{u,v:\, \text { satisfying } \e{d81},\e{12}}\left|\int_{E^c}e^{2\pi i(u-v)x}dx\right|^2\le d(l,\lambda)|E^c|< 2^{-l-1},
\end{equation}
where the last inequality is obtained if we choose $\alpha(l,\lambda)=1-(d(l,\lambda)\cdot 2^{l+1})^{-1}$ in  \e{d57}. On the other hand for a fixed $z$, $\#(z)\le l-1$, we can write
\begin{align}
	\sum_{u,v:\, \text { satisfying } \e{d81},\e{12}}|b_{u\vee z}b_{v\vee z}|^2&\le 	\sum_{u:\, u\wedge z=0,\, \#(u)+\#(z)\le l}|b_{u\vee z}|^2\\
	&\qquad\qquad\times	\sum_{v:\,  v\wedge z=0,\,\#(v)+\#(z)\le l}|b_{v\vee z}|^2\\
	&=\left(\sum_{u:\, u\wedge z=0,\,\#(u)+\#(z)\le l}|b_{u\vee z}|^2\right)^2.\label{d84}
\end{align}
Then, any integer can have at most $2^l$ number of representation $u\vee z$, where $u,z$ satisfy \e{d82}-\e{d81}. Thus from \e{d77} and \e{d78} we obtain
\begin{align*}
	\left|S_2\right|<2^{-l-1} \sum_{z:\, \#(z)\le l-1}\,\,\sum_{u:\, u\wedge z=0\,\#(u)+\#(z)\le l}|b_{u\vee z}|^2\le \frac{1}{2} \sum_{m\in \zN_+^*(l)}|b_m|^2,
\end{align*}
where we use the fact that the number of  representation $s=u\vee z$ of an integer $1\le s\le l$ doesn't exceed $2^l$. Therefore,
	\begin{align}
		S\ge S_1-|S_2|>|E|\cdot \sum_{m\in \zN_+^*(l)} |b_m|^2-\frac{1}{2}\cdot \sum_{m\in \zN_+^*(l)} |b_m|^2>c\sum_{m\in \zN_+^*(l)}|b_m|^2.
	\end{align}
\end{proof}
\begin{proof}[Proof of \trm{T6}]
	Without loss of generality we can suppose  that
	\begin{equation}
		\int_E\left|\sum_{m\in \zN_+^*(l)} t_{n,m}b_me^{2\pi imx}\right|^2<M,\quad n=1,2,\ldots,
	\end{equation}
	as $x\in E$. Then, from \lem{L11} we obtain
	\begin{equation*}
		\sum_{m\in \zN_+^*(l)} |t_{n,m}|^2|b_m|^2\lesssim  \int_E\left|\sum_{m\in \zN_+^*(l)} t_{n,m}b_me^{2\pi imx}\right|^2<M
	\end{equation*}
and, applying \e{d60}, we conclude $\sum_{m\in \zN_+^*(l)} |b_m|^2\le M$.
\end{proof}

\begin{proof}[Proof of \trm{T7}]
	Using the Egorov theorem we can suppose that the sums converge to zero uniformly on $E$ of measure \e{d11}. Thus, applying \lem{L11}, for bigger enough integers $n$ we can write
	\begin{align}
		\varepsilon>\int_E\left|\sum_{m\in \zN_+^*(l)} t_{n,m}b_me^{2\pi imx}\right|^2\ge c	\sum_{m\in \zN_+^*(l)} |t_{n,m}|^2|b_m|^2
	\end{align}
and, once again using \e{d60}, we conclude $b_m=0$, $m\in \zN_+^*(l)$.
\end{proof}
\section{Proofs of Theorems \ref{T4} and \ref{T5}}\label{S6}
The approach used in \sect{S5} may be readily applied in the proofs of Theorems \ref{T6} and \ref{T7}. So we will just briefly state the proof of lemma analogous to \lem{L11}, leaving the proofs of the theorems to the reader.
\begin{lemma}\label{L13}
	Let $l\ge 2$, then for any finite sequence $b=\{b_m:\, m\in  \ZD^*(l)\}$ and a set $E$ of measure \e{d57} we have
	\begin{equation}\label{55}
		\int_E\left|\sum_{m\in \ZD^*(l)} b_mw_m(x)\right|^2>c\sum_{m\in  \ZD^*(l)} |b_m|^2
	\end{equation}
\end{lemma}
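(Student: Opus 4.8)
The plan is to mimic the proof of \lem{L11}, with the Walsh system replacing the lacunary trigonometric system and the role of (P3), (P4) being played by the \emph{unique} dyadic representation \e{d61}. First I would expand the square exactly as in \e{d52}:
\md0
\int_E\Bigl|\sum_{m\in\ZD^*(l)}b_mw_m(x)\Bigr|^2
=|E|\sum_{m\in\ZD^*(l)}|b_m|^2-\sum_{t,s\in\ZD^*(l),\,t\neq s}b_tb_s\int_{E^c}w_t(x)w_s(x)\,dx,
\emd
using $\int_0^1 w_t w_s=0$ for $t\neq s$ to pass from $E$ to $E^c$. Here I use that $w_t w_s=w_{t\triangle s}$, where $t\triangle s$ is the integer whose dyadic digits are the symmetric difference of those of $t$ and $s$; in particular $w_t w_s$ is itself a Walsh function, and it is the constant $1$ precisely when $t=s$, which is why the diagonal is isolated cleanly. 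Call the off-diagonal term $S_2$.

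Next I would reparametrize the off-diagonal sum the same way as in \lem{L11}: each $m\in\ZD^*(l)$ corresponds bijectively to a subset $A\subset\ZN$ with $1\le\#A\le l$ (its set of dyadic exponents), and for disjoint $A,B$ I write $t\vee s$ for the integer with exponent set $A\cup B$ (defined when $\#(A\cup B)\le l$) and $t\wedge s$ for the one with exponent set $A\cap B$. Writing a generic pair $t=u\vee z$, $s=v\vee z$ with $u,v,z$ pairwise "disjoint" (meaning $u\wedge v=u\wedge z=v\wedge z=0$) and $\#(u)+\#(v)\le l-\#(z)$, $u+v>0$, $\#(z)\le l-1$, the term becomes
\md0
S_2=\sum_{u,v,z}b_{u\vee z}\,b_{v\vee z}\int_{E^c}w_{u\triangle v}(x)\,dx.
\emd
Then Hölder in the pair $(u,v)$ splits $|S_2|$ into the product of $\bigl(\sum|b_{u\vee z}b_{v\vee z}|^2\bigr)^{1/2}$ and $\bigl(\sum_{u,v}|\int_{E^c}w_{u\triangle v}|^2\bigr)^{1/2}$, exactly as in \e{d77}.

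For the Bessel factor, the key point is that each integer $m$ arises as $u\triangle v$ (with $u,v$ disjoint, $0\le\#u,\#v\le l$) in at most $2^{l}$ ways, since the exponent set of $m$ must be partitioned among the "positive part" going to $u$ and the rest; hence $\{w_{u\triangle v}\}$ is a union of at most $2^{l}$ orthonormal families, and Bessel gives $\sum_{u,v}|\int_{E^c}w_{u\triangle v}|^2\le 2^{l}|E^c|$. The coefficient factor is handled as in \e{d84}: $\sum_{u,v}|b_{u\vee z}b_{v\vee z}|^2=\bigl(\sum_u|b_{u\vee z}|^2\bigr)^2$, and summing over $z$ while noting each $m\in\ZD^*(l)$ is of the form $u\vee z$ in at most $2^{l}$ ways yields $\sum_{z}\sum_u|b_{u\vee z}|^2\le 2^{l}\sum_m|b_m|^2$. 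Combining, $|S_2|\le 2^{l}|E^c|^{1/2}\cdot 2^{l}\sum_m|b_m|^2$. Choosing $|E^c|<2^{-4l}$, i.e. $|E|>1-2^{-4l}$ as in \e{d11}, makes the bracket $2^{2l}|E^c|^{1/2}<\tfrac12$, so $|S_2|\le\tfrac12\sum_m|b_m|^2$ and
\md0
\int_E\Bigl|\sum_{m\in\ZD^*(l)}b_mw_m\Bigr|^2\ge\Bigl(|E|-\tfrac12\Bigr)\sum_m|b_m|^2>c\sum_m|b_m|^2,
\emd
which is \e{55}. The only real point requiring care is the bookkeeping of multiplicities — that both the number of representations $m=u\triangle v$ and the number $m=u\vee z$ are bounded by $2^{l}$ — and the precise constant these force on $|E|$; everything else is a transcription of the argument for \lem{L11} with orthogonality of characters replaced by orthogonality of Walsh functions. (If one is willing to optimize, the factors of $2^{l}$ can surely be sharpened, but $2^{-4l}$ suffices and matches \e{d11}.)
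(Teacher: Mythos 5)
Your overall strategy is exactly the paper's: the same diagonal/off-diagonal split, the same reparametrization $t=u\vee z$, $s=v\vee z$, H\"older followed by Bessel on $E^c$ (using $w_uw_v=w_{u+v}$ for disjoint supports), the identity \e{d84}, and the $2^l$ count for representations $m=u\vee z$; also your reading of the measure hypothesis as \e{d11}, $|E|>1-2^{-4l}$, is the intended one. However, there is a genuine gap in the multiplicity count for the Bessel step and in the closing arithmetic. An integer $m=u\triangle v=u+v$ with $u,v$ of disjoint dyadic supports and $\#(u),\#(v)\le l$ can have up to $2l$ dyadic digits, so the number of ordered splittings of its exponent set is \emph{not} at most $2^l$ as you claim; it can be as large as $\binom{2l}{l}\sim 4^l/\sqrt{\pi l}$ (attained when $\#(m)=2l$). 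Your own "combining" line in fact uses a factor $2^{2l}|E^c|^{1/2}$, but the concluding claim that $2^{2l}|E^c|^{1/2}<\tfrac12$ is false under $|E^c|<2^{-4l}$: one only gets $2^{2l}|E^c|^{1/2}<1$, and the resulting bound $S\ge(|E|-1)\sum_m|b_m|^2$ is not a positive lower bound. So as written the proof does not close.

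The repair is precisely the refinement the paper makes: bound the number of representations $m=u+v$ by $d(m)\le\binom{2l}{l}<2^{2l}/\sqrt l$, so Bessel gives $\sum_{u,v}\bigl|\int_{E^c}w_{u+v}\bigr|^2<2^{2l}|E^c|/\sqrt l<1/(2^{2l}\sqrt l)$; combined with the (correct) $2^l$ count for $m=u\vee z$ this yields $|S_2|<l^{-1/4}\sum_m|b_m|^2\le 2^{-1/4}\sum_m|b_m|^2$ for $l\ge 2$, while $|E|>1-2^{-4l}\ge 1-2^{-8}$, so $S>(|E|-l^{-1/4})\sum_m|b_m|^2>c\sum_m|b_m|^2$ with an absolute $c>0$. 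In other words, with the threshold \e{d11} as stated, the $\sqrt l$ saving coming from the binomial coefficient (rather than the crude $4^l$, let alone $2^l$) is what makes the final constant positive; alternatively one could keep a crude count but would then need a strictly stronger measure hypothesis such as $|E|>1-2^{-4l-2}$. Everything else in your argument matches the paper's proof of \lem{L13}.
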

\begin{proof}
	We have
	\begin{align}
		S=\int_E\left|\sum_{m\in \ZD^*(l)} b_mw_m(x)\right|^2		&=|E|\cdot \sum_{m\in \ZD^*(l)} |b_m|^2\\
		&\qquad -\sum_{t,s\in \ZD^*(l),\, t\neq s}b_{t}b_{s}\int_{E^c}w_t(x)w_s(x)dx\\
		&=S_1-S_2.\label{52}
	\end{align}
Define a mapping $\tau$, assigning  a set $A=\{k_1,\ldots, k_s\}\subset \ZN$ to the number $m\in \ZD^*(l)$ in \e{d61}.  Using the notations \e{d83}, the second sum in \e{52} can be written in the form
	\begin{equation}
		S_2=\sum_{u,v,z}b_{u\vee z}b_{v\vee z}\int_Ew_u(x)w_v(x)dx,
	\end{equation}
	where the summation is taken over all the integers $u,v,z\in \ZD^*(l)$, satisfying \e{d82}-\e{12}.
	Using H\"older's inequality and writing $w_u\cdot w_v=w_{u+v}$, the second sum in \e{52} may be estimated by
	\begin{align*}
		|S_2|&\le \sum_{z:\, \#(z)\le l-1}\left(\sum_{u,v:\, \text { satisfying }\e{d81},\e{12}}|b_{u\vee z}b_{v\vee z}|^2\right)^{1/2}\\
		&\qquad\qquad\qquad\qquad\times \left(\sum_{u,v:\, \text { satisfying }\e{d81},\e{12}}\left|\int_{E^c}w_{u+v}(x)dx\right|^2\right)^{1/2}
	\end{align*}
Let $d(m)$ denote the number of all possible representation of an integer $m$ by $u+v$, where $u,v\in \ZD^*(l)$, $u\wedge v=0$, $\#(u),\#(v)\le l$. Observe that $d(m)<\frac{2^{2l}}{\sqrt{l}}$ for any $m$. Indeed, such a representation is possible if $\#(m)\le 2l$. One can calculate that the $d(m)$ takes its biggest value when $\#(m)=2l$ and in this case we will have
	\begin{equation*}
		d(m)=\binom{2l}{l}<\frac{2^{2l}}{\sqrt{l}}.
	\end{equation*}
	Then, by Bessel's inequality we get
	\begin{equation}
		\sum_{u,v:\, \text { satisfying }\e{d81},\e{12}}\left|\int_{E^c}w_{u+v}(x)dx\right|^2\le \frac{2^{2l}|E^c|}{\sqrt l}< \frac{1}{2^{2l}\sqrt{l}}.
	\end{equation}
Besides, we can write \e{d84}, and therefore
	\begin{align*}
		\left|S_2\right|< \frac{1}{2^{l}\sqrt[4]{l}} \sum_{z:\, \#(z)\le l-1}\sum_{u:\, u\wedge z=0}|b_{u\vee z}|^2\le \frac{1}{\sqrt[4]{l}}  \sum_{m\in \ZD^*(l)}|b_{m}|^2.
	\end{align*}
	Thus we obtain
	\begin{align}
		S\ge S_1-|S_2|>|E|\cdot \sum_{m\in \ZD^*(l)} |b_m|^2- \frac{1}{\sqrt[4]{l}}  \cdot \sum_{m\in \ZD^*(l)} |b_m|^2>c\sum_{k} |b_k|^2.
	\end{align}
\end{proof}
\section{Proof of \trm{T8}}\label{S10}
We will use the notation $x\oplus y=x+y\mod 1$. 
\begin{lemma}\label{L10}
If $\alpha\in [0,1)$ and an integer $m\in \ZD(l)$ has a representation \e{d70}, then for any $n\in \ZD(l)$ we have
\begin{equation}\label{d65}
	\sum_{\varepsilon_j=0,1}(-1)^{\varepsilon_1+\ldots+\varepsilon_l}w_n\left(\alpha\oplus \frac{\varepsilon_1}{2^{k_1}}\oplus \ldots\oplus \frac{\varepsilon_l}{2^{k_l}}\right)=\left\{\begin{array}{lll}
		0&\text { if }&n\neq m,\\
		\pm 2^l&\text{ if }& n=m.
	\end{array}
	\right.
\end{equation}	
\end{lemma}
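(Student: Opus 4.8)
The plan is to read the left side of \e{d65} as an iterated finite difference of the Walsh function $w_n$ and to evaluate it through the multiplicative behaviour of Walsh characters under dyadic shifts. Write $n=2^{n_1}+\dots+2^{n_l}$ with $n_1>\dots>n_l$, so that $w_n=r_{n_1}\cdots r_{n_l}$, and recall that $r_k(x)=(-1)^{\lfloor 2^kx\rfloor}$ equals $(-1)$ to the power of the $k$-th binary digit of $x$; in particular $r_k(x\oplus 2^{-k})=-r_k(x)$ for every $x$, while $r_k(x\oplus 2^{-j})=r_k(x)$ whenever $j<k$. Setting $\Delta_jf(x)=f(x)-f(x\oplus 2^{-k_j})$ and using that translations commute, the sum in \e{d65} is exactly $\big(\prod_{j=1}^{l}\Delta_j\big)w_n(\alpha)$.

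First I would pass to binary digits. The number $\sum_j\varepsilon_j2^{-k_j}$ carries its $1$'s precisely in the positions $k_1,\dots,k_l$, so that adding it to $\alpha$ changes --- apart from carries into positions below the $k_j$, see below --- only the digits in those positions; hence $w_n\big(\alpha\oplus\sum_j\varepsilon_j2^{-k_j}\big)=w_n(\alpha)\prod_{j}(-1)^{\varepsilon_j\chi_j}$, where $\chi_j=1$ if $k_j\in\{n_1,\dots,n_l\}$ and $\chi_j=0$ otherwise. Substituting this into the alternating sum makes it factor over $j$:
\begin{equation*}
  \Big(\prod_{j=1}^{l}\Delta_j\Big)w_n(\alpha)=w_n(\alpha)\prod_{j=1}^{l}\big(1-(-1)^{\chi_j}\big).
\end{equation*}
The $j$-th factor is $2$ when $k_j$ is one of the exponents of $n$ and $0$ otherwise; since $m=2^{k_1}+\dots+2^{k_l}$ and $n$ have exactly $l$ exponents each, the product is nonzero only when $\{k_1,\dots,k_l\}=\{n_1,\dots,n_l\}$, i.e.\ only when $n=m$, in which case it equals $2^l$, and as $|w_m(\alpha)|=1$ the sum is then $\pm 2^l$; for $n\neq m$ it is $0$. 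This gives \e{d65}.

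The point that needs care, and the main obstacle, is the carry propagation glossed over above: the shift $\oplus 2^{-k_j}$ can flip binary digits of $\alpha$ in positions strictly below $k_j$, and if such a position happens to be one of the exponents $n_i$ of $n$, the factorisation no longer holds verbatim. The cleanest way around this is to replace, throughout the alternating sum, the circle addition $\oplus$ by coordinatewise (carry-free) addition of binary digits, under which the $2^l$ relevant shifts form a finite abelian group $G$ of order $2^l$ on which every Walsh function is multiplicative, so that the displayed factorisation holds with no error term; one then only has to check that this replacement does not change the value of the alternating sum. I would verify that by induction on $l$, peeling off the coarsest scale $k_l$ first --- for which $\Delta_l w_n=\big(1-(-1)^{\chi_l}\big)w_n$ already holds exactly, since adding $2^{-k_l}$ carries only into positions below $k_l$ --- and treating separately the case in which $n$ has an exponent smaller than $\min_jk_j$, where the innermost sum vanishes outright.
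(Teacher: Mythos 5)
Your first paragraph is, in substance, exactly the paper's own proof: writing $w_n=r_{n_1}\cdots r_{n_l}$ and using that the shift by $2^{-k_j}$ flips the factor $r_{k_j}$ and leaves every $r_{n_i}$ with $n_i\neq k_j$ unchanged, the alternating sum factors as $w_n(\alpha)\prod_{j}\bigl(1-(-1)^{\chi_j}\bigr)$, which gives \e{d65}. That computation is valid precisely when $\oplus$ is the \emph{dyadic} (carry-free) sum, and you are right to single out carry propagation as the real issue, since the paper declares $x\oplus y=x+y \bmod 1$.

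The repair you propose, however, cannot be carried out: you want to show that replacing the mod-$1$ sum by the carry-free sum ``does not change the value of the alternating sum,'' but it does change it, and indeed \e{d65} is false for the mod-$1$ sum. Take $l=2$, $m=2^{3}+2^{1}$ (so $k_1=3$, $k_2=1$), $n=2^{2}+2^{1}$, $w_n=r_2r_1$, and $\alpha=5/32$. The four points $\alpha$, $\alpha+\tfrac18$, $\alpha+\tfrac12$, $\alpha+\tfrac18+\tfrac12$ are $5/32$, $9/32$, $21/32$, $25/32$, lying in $(0,\tfrac14)$, $(\tfrac14,\tfrac12)$, $(\tfrac12,\tfrac34)$, $(\tfrac34,1)$ respectively, where $w_n$ takes the values $+1,-1,-1,+1$; the alternating sum is therefore $1+1+1+1=4$, although $n\neq m$. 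The culprit is the carry of $+\tfrac18$ from digit $3$ into digit $2$, where $r_2$ lives; your fallback case (``an exponent of $n$ smaller than $\min_j k_j$'') does not cover an exponent of $n$ lying strictly between two of the $k_j$, and your induction breaks at the second scale exactly as in this example. The only viable route is the one your first paragraph already takes: read $\oplus$ as the dyadic sum throughout \sect{S10} --- this is what the paper's identity \e{d67} tacitly assumes, since that identity fails for mod-$1$ shifts when $k<s$ --- and observe that \lem{L12} and its measure count survive unchanged because dyadic translations also preserve Lebesgue measure. With that reading your first paragraph is a complete proof, and the second paragraph should be dropped rather than completed.
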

\begin{proof}
	If $n\in \ZD(l)$ and $n\neq m$, then there is a term in \e{d70}, say $2^{k_{s}}$ , which is not included in the dyadic representation of $n$.  One can check that
	\begin{equation}\label{d67}
		r_k(x\oplus 2^{-s})=\left\{\begin{array}{rll}
			r_k(x)&\text { if }&k\neq s,\\
			-r_k(x)&\text{ if }& k=s,
		\end{array}
		\right.
	\end{equation}
and so by the definition of a Walsh function we can say that the value of
\begin{equation}\label{d66}
	w_n\left(\alpha\oplus \frac{\varepsilon_1}{2^{k_1}}\oplus \ldots\oplus \frac{\varepsilon_s}{2^{k_s}}\oplus \ldots \oplus \frac{\varepsilon_l}{2^{k_l}}\right)
\end{equation}
doesn't depend on $\varepsilon_s$. 	Thus one can conclude that each term of the sum \e{d65} has its opposite pair and so the whole sum is zero. If $n=m$, then the argument of \e{d67} implies
\begin{equation}
	w_m\left(\alpha\oplus \frac{\varepsilon_1}{2^{k_1}}\oplus \ldots\oplus \frac{\varepsilon_l}{2^{k_l}}\right)=\prod_{j=1}^lr_{k_j}\left(\alpha\oplus \frac{\varepsilon_j}{2^{k_j}}\right)
\end{equation}
and so the sum in \e{d65} is equal
\begin{align*}
		\sum_{\varepsilon_j=0,1}\prod_{j=1}^l(-1)^{\varepsilon_j}r_{k_j}\left(\alpha\oplus \frac{\varepsilon_j}{2^{k_j}}\right)=\prod_{j=1}^l\left(r_{k_j}\left(\alpha\right)-r_{k_j}\left(\alpha\oplus 2^{-k_j}\right)\right)=\pm 2^l.
\end{align*}
Lemma is proved.
\end{proof}
\begin{lemma}\label{L12}
	If $E\subset [0,1)$ has a measure $|E|>1-2^{-l}$, then there exists a $\alpha\in [0,1)$ such that
	\begin{equation}
		\left\{\alpha\oplus \frac{\varepsilon_1}{2^{k_1}}\oplus \ldots\oplus \frac{\varepsilon_l}{2^{k_l}}:\, \varepsilon_j=0,1\right\}\subset E.
	\end{equation}
\end{lemma}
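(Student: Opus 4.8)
The plan is to realize the finite point set $\{\alpha\oplus\frac{\varepsilon_1}{2^{k_1}}\oplus\ldots\oplus\frac{\varepsilon_l}{2^{k_l}}:\varepsilon_j=0,1\}$ as a single coset (orbit) of a fixed finite subgroup of $\mathbb{R}/\mathbb{Z}$, and then apply a pigeonhole/averaging argument over all cosets. Concretely, without loss of generality assume $k_1>k_2>\ldots>k_l$, so $2^{-k_1}$ has the largest order; set $G=\{\frac{\varepsilon_1}{2^{k_1}}\oplus\ldots\oplus\frac{\varepsilon_l}{2^{k_l}}:\varepsilon_j=0,1\}$. Because the $2^{-k_j}$ are distinct dyadic rationals, the $2^l$ sums are pairwise distinct mod $1$, so $G$ is a subgroup of $\mathbb{R}/\mathbb{Z}$ of order exactly $2^l$ (indeed $G=\frac{1}{2^{k_1}}\mathbb{Z}/\mathbb{Z}\cap$ the appropriate sublattice — but all we need is $|G|=2^l$). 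The set in the statement is exactly the coset $\alpha\oplus G$.

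Next I would average the indicator of $E$ over cosets of $G$. Partition $[0,1)$ into the cosets $\beta\oplus G$, $\beta$ ranging over a set of coset representatives; equivalently, consider the map $\pi:[0,1)\to[0,1)/G$. For each coset $C=\beta\oplus G$ we have $|C\cap E|\le |C|=2^l\cdot(\text{normalized})$; more precisely, writing $f(\beta)=\frac{1}{2^l}\#\{g\in G: \beta\oplus g\in E\}$ for the fraction of the coset lying in $E$, Fubini/translation-invariance of Lebesgue measure gives $\int f\,d(\text{Haar on }[0,1)/G)=|E|>1-2^{-l}$. If no coset were entirely contained in $E$, then $f(\beta)\le 1-2^{-l}$ for every $\beta$ (since each coset has $2^l$ points and missing at least one point drops the fraction to at most $1-2^{-l}$), whence $\int f\le 1-2^{-l}$, contradicting the strict inequality $|E|>1-2^{-l}$. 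Therefore some coset $\alpha\oplus G\subset E$, which is the desired conclusion.

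I do not expect a serious obstacle here; the only points requiring a little care are (i) verifying that the $2^l$ translates $\alpha\oplus\frac{\varepsilon_1}{2^{k_1}}\oplus\ldots\oplus\frac{\varepsilon_l}{2^{k_l}}$ are genuinely distinct (so that "missing a point" really costs a full $2^{-l}$ in density), which follows from the uniqueness of finite dyadic expansions since $k_1>\ldots>k_l$, and (ii) setting up the quotient measure correctly so that $\int f = |E|$ — this is just the statement that Lebesgue measure on $[0,1)$ disintegrates over the $2^l$-element fibers of $\pi$ with uniform conditional measure, a consequence of translation invariance. With those in hand the pigeonhole step is immediate, and in fact the argument shows the conclusion can fail only when $|E|\le 1-2^{-l}$, matching the sharpness of the measure condition $|E|>1-2^{-l}$ noted in \trm{T8}.
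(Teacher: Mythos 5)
Your underlying averaging/pigeonhole idea is sound, but the group-theoretic scaffolding you hang it on is wrong. The set $G=\{\varepsilon_1 2^{-k_1}\oplus\cdots\oplus\varepsilon_l 2^{-k_l}:\varepsilon_j=0,1\}$ is in general \emph{not} a subgroup of $\mathbb{R}/\mathbb{Z}$: pairwise distinctness of the $2^l$ sums does not give closure under addition. Already for $l=1$, $k_1=2$ you get $G=\{0,1/4\}$ and $1/4\oplus 1/4=1/2\notin G$; $G$ is a subgroup only in the special case $\{k_1,\ldots,k_l\}=\{1,\ldots,l\}$. Consequently there is no partition of $[0,1)$ into cosets of $G$, no quotient $[0,1)/G$, and the disintegration you invoke in your point (ii) does not exist as stated. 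Fortunately none of this is needed: define $f(\beta)=2^{-l}\sum_{\varepsilon\in\{0,1\}^l}\mathbf{1}_E\bigl(\beta\oplus \varepsilon_1 2^{-k_1}\oplus\cdots\oplus\varepsilon_l 2^{-k_l}\bigr)$ and integrate over $\beta\in[0,1)$ with Lebesgue measure. Translation invariance of each summand gives $\int_0^1 f\,d\beta=|E|>1-2^{-l}$, while if no $\beta$ had its full translate set inside $E$ then at least one of the $2^l$ summands would vanish for every $\beta$, so $f\le 1-2^{-l}$ pointwise, a contradiction. (For this step you do not even need the $2^l$ points to be distinct, since you index by tuples $\varepsilon$; distinctness does hold, by uniqueness of dyadic expansions, but it is not what carries the argument.) With that repair your proof is correct.

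It is also genuinely different from the paper's argument, which intersects iteratively: $E_0=E$, $E_{j+1}=E_j\cap\bigl(2^{-k_{j+1}}\oplus E_j\bigr)$, so that $|E_{j+1}|\ge 2|E_j|-1$ and hence $|E_l|>0$; any $\alpha\in E_l$ then has all $2^l$ translates in $E$. Both routes are elementary and use only translation invariance; the paper's recursion localizes the loss of measure step by step, whereas your (corrected) averaging argument reaches the conclusion in one stroke and makes the sharpness of the threshold $1-2^{-l}$ completely transparent, in line with the example mentioned after Theorem \ref{O3}.
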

\begin{proof}
	Define the sets $E_j\subset [0,1)$ by the recursive formula
\begin{align*}
E_0=E,\quad E_{j+1}=E_j\cap (2^{-k_j}\oplus E_j),\quad j=0,1,\ldots,l-1.	
\end{align*}
One can check inductively that $|E_{k+1}|>|E_k|-2^{k-l}$. Indeed, we have
\begin{equation*}
	|E_1|=1-|E^c\cup(2^{-k_j}\oplus E_j)^c|>1-2^{1-l},
\end{equation*}
and continuing similarly, we will get $|E_l|>0$. Then on can check that $x\in E_l$ implies 
\begin{equation}
	x\oplus \frac{\varepsilon_1}{2^{k_1}}\oplus \ldots\oplus \frac{\varepsilon_l}{2^{k_l}}\in E
\end{equation}
for any choice of $\varepsilon_j=0,1$. Thus as an $\alpha$ we can arbitrary point of the nonempty set $E_l$.
\end{proof}
\begin{proof}[Proof of \trm{T8}]
Let
	\begin{equation}\label{d68}
		\sum_{k=1}^\infty t_{n,k}a_kw_{m_k}(x)\to 0
	\end{equation}
as $n\to\infty$ on a set $E$, $|E|>1-2^{-l}$. Choose $m\in \ZD(l)$ with a representation \e{d70}. According to \lem{L12}, there exists $\alpha$ such that
	\begin{equation}
		\alpha\oplus \frac{\varepsilon_1}{2^{k_1}}\oplus \ldots\oplus \frac{\varepsilon_l}{2^{k_l}}\in E
	\end{equation}
for any choice of $\varepsilon_j=0,1$. Substituting one of these points in \e{d68} and applying \lem{L12}, we obtain
\begin{equation*}
	t_{n,m}a_mw_m\left(\alpha\oplus \frac{\varepsilon_1}{2^{k_1}}\oplus \ldots\oplus \frac{\varepsilon_l}{2^{k_l}}\right)=0,\quad n=1,2,\ldots.
\end{equation*}
Since $t_{n,m}\to 1$ as $n\to \infty$, applying \lem{L10}, we find
\begin{equation*}
	a_m\sum_{\varepsilon_j=0,1}(-1)^{\varepsilon_1+\ldots+\varepsilon_l}w_m\left(\alpha\oplus \frac{\varepsilon_1}{2^{k_1}}\oplus \ldots\oplus \frac{\varepsilon_l}{2^{k_l}}\right)=
	\pm a_m 2^l=0
\end{equation*}
and so $a_m=0$. Theorem is proved.
\end{proof}

\begin{bibdiv}
\begin{biblist}
	\bib{Ast1}{book}{
		author={Astashkin, Sergey V.},
		title={The Rademacher system in function spaces},
		publisher={Birkh\"{a}user/Springer, Cham},
		date={[2020] \copyright 2020},
		pages={xx+559},
		isbn={978-3-030-47889-6},
		isbn={978-3-030-47890-2},
		review={\MR{4230108}},
		doi={10.1007/978-3-030-47890-2},
	}
	\bib{Ast2}{article}{
		author={Astashkin, S. V.},
		author={Sukhanov, R. S.},
		title={On certain properties of Rademacher chaos},
		note={Translation of Mat. Zametki {\bf 91} (2012), no. 5, 654--666},
		journal={Math. Notes},
		volume={91},
		date={2012},
		number={5-6},
		pages={613--624},
		issn={0001-4346},
		review={\MR{3201498}},
		doi={10.1134/S0001434612050021},
	}
\bib{Car}{article}{
	author={Carleson, Lennart},
	title={On convergence and growth of partial sums of Fourier series},
	journal={Acta Math.},
	volume={116},
	date={1966},
	pages={135--157},
	issn={0001-5962},
	review={\MR{199631}},
	doi={10.1007/BF02392815},
}
		\bib{Bari}{book}{
		author={Bary, N. K.},
		title={A treatise on trigonometric series. Vols. I, II},
		series={Authorized translation by Margaret F. Mullins. A Pergamon Press
			Book},
		publisher={The Macmillan Co., New York},
		date={1964},
		pages={Vol. I: xxiii+553 pp. Vol. II: xix+508},
		review={\MR{0171116}},
	}
\bib{Bil}{article}{
	author={Billard, P.},
	title={Sur la convergence presque partout des s\'{e}ries de Fourier-Walsh des
		fonctions de l'espace $L^{2}\,(0,\,1)$},
	journal={Studia Math.},
	volume={28},
	date={1966/67},
	pages={363--388},
	issn={0039-3223},
	review={\MR{217510}},
	doi={10.4064/sm-28-3-363-388},
}
\bib{Blei}{book}{
	author={Blei, Ron},
	title={Analysis in integer and fractional dimensions},
	series={Cambridge Studies in Advanced Mathematics},
	volume={71},
	publisher={Cambridge University Press, Cambridge},
	date={2001},
	pages={xx+556},
	isbn={0-521-65084-4},
	review={\MR{1853423}},
	doi={10.1017/CBO9780511543012},
}
\bib{Bon}{article}{
	author={Bonami, Aline},
	title={\'{E}tude des coefficients de Fourier des fonctions de $L^{p}(G)$},
	language={French, with English summary},
	journal={Ann. Inst. Fourier (Grenoble)},
	volume={20},
	date={1970},
	number={fasc. 2},
	pages={335--402 (1971)},
	issn={0373-0956},
	review={\MR{283496}},
}

\bib{Don}{book}{
	author={O'Donnell, Ryan},
	title={Analysis of Boolean functions},
	publisher={Cambridge University Press, New York},
	date={2014},
	pages={xx+423},
	isbn={978-1-107-03832-5},
	review={\MR{3443800}},
	doi={10.1017/CBO9781139814782},
}
\bib{Haa}{article}{
	author={Haagerup, Uffe},
	title={The best constants in the Khintchine inequality},
	journal={Studia Math.},
	volume={70},
	date={1981},
	number={3},
	pages={231--283 (1982)},
	issn={0039-3223},
	review={\MR{654838}},
	doi={10.4064/sm-70-3-231-283},
}

\bib{Ivan}{article}{
	author={Ivanisvili, Paata},
	author={Tkocz, Tomasz},
	title={Comparison of moments of Rademacher chaoses},
	journal={Ark. Mat.},
	volume={57},
	date={2019},
	number={1},
	pages={121--128},
	issn={0004-2080},
	review={\MR{3951277}},
	doi={10.4310/ARKIV.2019.v57.n1.a7},
}
\bib{Jan}{book}{
	author={Janson, Svante},
	title={Gaussian Hilbert spaces},
	series={Cambridge Tracts in Mathematics},
	volume={129},
	publisher={Cambridge University Press, Cambridge},
	date={1997},
	pages={x+340},
	isbn={0-521-56128-0},
	review={\MR{1474726}},
	doi={10.1017/CBO9780511526169},
}

\bib{KaSt}{book}{
	author={Ka\v{c}ma\v{z}, S.},
	author={\v{S}te{\i}ngauz, G.},
	title={Teoriya ortogonalnykh ryadov},
	language={Russian},
	publisher={Gosudarstv. Izdat. Fiz.-Mat. Lit., Moscow},
	date={1958},
	pages={507 pp. (1 insert)},
	review={\MR{0094635}},
}

\bib{KaSa}{book}{
	author={Kashin, B. S.},
	author={Saakyan, A. A.},
	title={Orthogonal series},
	series={Translations of Mathematical Monographs},
	volume={75},
	note={Translated from the Russian by Ralph P. Boas;
		Translation edited by Ben Silver},
	publisher={American Mathematical Society, Providence, RI},
	date={1989},
	pages={xii+451},
	isbn={0-8218-4527-6},
	review={\MR{1007141}},
}
\bib{Khi}{article}{
	author={Khintchine, A.},
	title={\"{U}ber dyadische Br\"{u}che},
	language={German},
	journal={Math. Z.},
	volume={18},
	date={1923},
	number={1},
	pages={109--116},
	issn={0025-5874},
	review={\MR{1544623}},
	doi={10.1007/BF01192399},
}
\bib{Kie}{book}{
	author={Kiener, K.},
title={\"Uber Produkte von quadratisch integrierbaren Funktionen
	endlicher Vielfalt},
series={Dissertation},
publisher={Universit\"at Innsbruck},
date={1969},
}
\bib{Lar}{article}{
	author={Larsson-Cohn, Lars},
	title={$L^p$-norms of Hermite polynomials and an extremal problem on
		Wiener chaos},
	journal={Ark. Mat.},
	volume={40},
	date={2002},
	number={1},
	pages={133--144},
	issn={0004-2080},
	review={\MR{1948890}},
	doi={10.1007/BF02384506},
}
\bib{Lit}{article}{
	author={Littlewood, J. E.},
	title={On bounded bilinear forms in an infinite number of variables},
	journal={Quart. J. Math. Oxford Ser.},
	volume={1},
	date={1930},
	pages={164--174},
	issn={0033-5606},
	doi={10.1093/qmath/os-1.1.164},
}
\bib{Men}{article}{
	author={Menshov, D. E.},
	title={Sur l'unicit\'e du d\'evelopement trigonom\'etrique},
	journal={C.R. Acad. Sci. Paris},
	volume={163},
	date={1916},
	pages={433--436},

}
\bib{Mul}{book}{
	author={M\"{u}ller, Paul F. X.},
	title={Isomorphisms between $H^1$ spaces},
	series={Instytut Matematyczny Polskiej Akademii Nauk. Monografie
		Matematyczne (New Series) [Mathematics Institute of the Polish Academy of
		Sciences. Mathematical Monographs (New Series)]},
	volume={66},
	publisher={Birkh\"{a}user Verlag, Basel},
	date={2005},
	pages={xiv+453},
	isbn={978-3-7643-2431-5},
	isbn={3-7643-2431-7},
	review={\MR{2157745}},
}
\bib{Ole}{article}{
	author={Olevskii, A. M.},
	title={Divergent Fourier series},
	language={Russian},
	journal={Izv. Akad. Nauk SSSR Ser. Mat.},
	volume={27},
	date={1963},
	pages={343--366},
	issn={0373-2436},
	review={\MR{0147834}},
}
\bib{PaZy}{article}{
		author={Paley, R.E.A.C.	},
	author={Zygmund, A.},
	title={On some series of functions, (1)},
	journal={Mathematical Proceedings of the Cambridge Philosophical Society},
	volume={26},
	number={3}
	date={1930},
	pages={337--357},
	issn={0039-3223},
	doi={10.1017/S0305004100016078},
}
\bib{Pich}{article}{
	author={Pichorides, S. K.},
	title={A note on the Littlewood-Paley square function inequality},
	journal={Colloq. Math.},
	volume={60/61},
	date={1990},
	number={2},
	pages={687--691},
	issn={0010-1354},
	review={\MR{1096408}},
	doi={10.4064/cm-60-61-2-687-691},
}
\bib{Ste}{article}{
	author={Ste\v{c}kin, S. B.},
	title={On best lacunary systems of functions},
	language={Russian},
	journal={Izv. Akad. Nauk SSSR Ser. Mat.},
	volume={25},
	date={1961},
	pages={357--366},
	issn={0373-2436},
	review={\MR{0131097}},
}
\bib{Sza}{article}{
	author={Szarek, S. J.},
	title={On the best constants in the Khinchin inequality},
	journal={Studia Math.},
	volume={58},
	date={1976},
	number={2},
	pages={197--208},
	issn={0039-3223},
	review={\MR{430667}},
	doi={10.4064/sm-58-2-197-208},
}
\bib{Uly1}{article}{
	author={Ul\cprime yanov, P. L.},
	title={Divergent Fourier series},
	language={Russian},
	journal={Uspehi Mat. Nauk},
	volume={16},
	date={1961},
	number={3 (99)},
	pages={61--142},
	issn={0042-1316},
	review={\MR{0125398}},
}
\bib{Uly2}{article}{
	author={Ul\cprime janov, P. L.},
	title={Divergent series over a Haar system and over bases},
	language={Russian},
	journal={Dokl. Akad. Nauk SSSR},
	volume={138},
	date={1961},
	pages={556--559},
	issn={0002-3264},
	review={\MR{0132960}},
}
\bib{StUl}{article}{
	author={Ste\v{c}kin, S. B.},
	author={Ul\cprime janov, P. L.},
	title={On sets of uniqueness},
	language={Russian},
	journal={Izv. Akad. Nauk SSSR Ser. Mat.},
	volume={26},
	date={1962},
	pages={211--222},
	issn={0373-2436},
	review={\MR{0136925}},
}
\bib{You}{article}{
	author={Young, R. M. G.},
	title={On the best possible constants in the Khintchine inequality},
	journal={J. London Math. Soc. (2)},
	volume={14},
	date={1976},
	number={3},
	pages={496--504},
	issn={0024-6107},
	review={\MR{438089}},
	doi={10.1112/jlms/s2-14.3.496},
}
\bib{Zyg}{book}{
	author={Zygmund, Antoni},
	title={Trigonometric series: Vol. I},
	series={Second edition, reprinted with corrections and some additions},
	publisher={Cambridge University Press, London-New York},
	date={1968},
	review={\MR{0236587}},
}
\bib{Zyg1}{article}{
	author={Zygmund, Antoni},
	title={On the convergence of lacunary trigonometric series},
	journal={Fund. Math.},
	volume={16}
	date={1930},
	pages={90--107},

}
\bib{Zyg2}{article}{
	author={Zygmund, Antoni},
	title={On lacunary trigonometric series},
	journal={Trans. Amer. Math. Soc.},
	volume={34},
	date={1932},
	number={3},
	pages={435--446},
	issn={0002-9947},
	review={\MR{1501647}},
	doi={10.2307/1989363},
}
\end{biblist}
\end{bibdiv}

\end{document}